\newcommand{\F}{\mathcal{F}}
\newcommand{\calF}{\mathcal{F}}
\newcommand{\calS}{\mathcal{S}}
\newcommand{\calJ}{\mathcal{J}}
\newcommand{\pS}{\partial \mathcal{S}}
\newcommand{\DIV}{\textnormal{div}\,}
\newcommand{\eps}{\varepsilon}
\newcommand{\bbR}{\mathbb{R}}
\newcommand{\bbI}{\mathbb{I}}
\newcommand{\lra}{\longrightarrow}
\def\longrightharpoonup{\relbar\joinrel\rightharpoonup}
\def\cv{\stackrel{w}{\longrightharpoonup}}
\newtheorem{Theorem}{Theorem}
\newtheorem{Definition}{Definition}
\newtheorem{Lemma}{Lemma}
\newtheorem{Remark}{Remark}
\begin{document}
	
\date{\today}
\title{On the trajectory of a light small rigid body in an incompressible viscous fluid}
\author{Marco Bravin \footnote{Delft Institute of Applied Mathematics, Delft University of Technology, Mekelweg 4
2628 CD Delft, The Netherlands }, \v{S}\'arka Ne\v{c}asov\'a\footnote{Institute of Mathematics, Czech Academy of Sciences \v{Z}itn\'a 25, 115 67 Praha 1, Czech Republic.}}

\maketitle
	
\begin{abstract}
In this paper we study the dynamics of a small rigid body in a viscous incompressible fluid in dimension two and three. More precisely we investigate the trajectory of the rigid body in the limit when the its mass and its size tend to zero. We show that the velocity of the center of mass of the rigid body coincides with the background fluid velocity in the limit. We are able to consider the case where the density of the small rigid body is uniformly bounded respect to its size.
\end{abstract}

\section{Introduction}

In this paper we study the interaction of a small light rigid body with a incompressible viscous fluid in dimension two and three. The system fluid plus rigid body occupies the domain $ \bbR^d  $ for $ d = 2,3$.  The unknowns of the problem are the position of the rigid body $ \calS(t) \subset \Omega $ and the velocity of the fluid $ u_{\F} $ which is defined on the fluid domain $ \calF(t) = \bbR^d \setminus \calF(t) $ with values in $ \bbR^d $. Moreover the equations that $ u_{\F} $  satisfies,  are the incompressible Navier-Stokes equation
\begin{align}
\partial_t u_{\F} + \DIV( u_{\F} \otimes u_{\F}) - \nu \Delta u_{\calF}  -\nabla  p_{\F} =  & \, 0 \quad && \text{ for } x \in \calF(t),     \nonumber\\
\DIV( u_{\F}) = & \, 0 \quad && \text{ for } x \in \calF(t), \label{fluid:equ}\\
u_{\F} = \, & u_{\calS} \quad && \text{ for  } x \in \partial \calS(t),  \nonumber \\ 
|u_{\F}|  \lra \, & 0 \quad && \text{ for } |x| \in \lra +\infty, \nonumber
\end{align}        
where $ u_{\calS} $ is the velocity of the rigid body and $ \nu > 0 $ is the viscosity coefficient. Regarding the rigid body we assume that  it occupies the position $ \calS^{in} $ at initial time and it is a connected, simply-connected subset of $ \bbR^d $ with smooth boundary and  that it has constant density $ \rho_{\calS} \in \bbR $ with $ \rho_{\calS} \geq C > 0 $. 

The evolution of the rigid body is completely determined by the dynamic of the center of mass and of the angular rotation. Recall that   $ \calS^{in} $ has mass and center of mass defined respectively by 
\begin{equation*} 
m = \int_{\calS^{in}} \rho_{\calS} \, dx  \quad \text{ and } \quad h^{in} = \frac{1}{m}\int_{\calS^{in}} \rho_ {\calS} x\, dx.
\end{equation*}
If we denote by $ h(t) $ the position of the center of mass at time $ t $ and $ Q(t) $ the rotation around the center of mass respect to the initial configuration then
the position of the rigid body at time $ t $ is 
\begin{equation*}
\calS(t) = \left\{ y \text{ such that } Q^{T}(t)(y-h(t)) \in \calS^{in} \right\}
\end{equation*} 
and in dimension $ d = 3 $, its velocity is
\begin{equation}
\label{velocity}
u_{\calS} = \frac{d}{dt}(h(t) + Q(t)x) \Big|_{x = Q^{T}(t)(y-h(t))} = h'(t) + Q'(t)Q^T(t)(y-h(t))  = \ell(t) + \omega(t) \times  (x-h(t))
\end{equation}
where we denote $ \ell(t) = h'(t) $. Moreover from the fact that $ Q(t) $ is a rotation matrix, $ Q'(t)Q^T(t) $ is skew symmetric and it can be identify with a vector $ \omega(t) \in \bbR^3 $ through the relation
\begin{equation*} 
 Q'(t)Q^T(t) x = \omega(t) \times x
\end{equation*}
where $ x \in \bbR^3  $.  The evolution of $ \ell $ and $ \omega $ follows the Newton's laws that write
\begin{align}
m \ell'(t) = & \, - \oint_{\pS(t)} \Sigma(u_{\F}, p_{\F}) n \, ds \label{body:equ} \\
\calJ(t) \omega'(t) = & \, \calJ(t) \omega(t) \times \omega(t) - \oint_{\pS(t)} (x-h(t)) \times \Sigma(u_{\F}, \rho_{\F}) n \, ds \nonumber
\end{align}
where $ \Sigma $ is the stress tensor 
\begin{equation*}
\Sigma(u,p) = 2\nu D(u)- p \bbI \quad \text{ where } D(u)  = \frac{\nabla u + (\nabla u)^T}{2}
\end{equation*}
and $ \calJ $ is the inertia momentum and it is given through the formula
\begin{equation*}
\calJ(t) = \int_{\calS(t)} \rho_{\calS}\left [|x-h(t)|^2 \bbI - (x-h(t))\otimes (x-h(t))\right] \, dx = Q(t)\calJ(0)Q^{T}(t). 
\end{equation*}
Finally the initial conditions are 
\begin{equation}
 u_{\F}(0) = u_{\F}^{in}, \quad \ell(0) = \ell^{in} \quad \text{ and } \omega(0) = \omega^{in}, \label{in:cond}
 \end{equation}
such that they satisfy the compatibility conditions
\begin{equation}
\label{comp:cond}
\DIV(u_{\F}^{in})= 0 \text{ in } \F^{in} \quad \text{ and } \quad  u_{\F}^{in} = \ell^{in} + \omega^{in} \times (x-h^{in}) \quad \text{ in } \partial \calS^{in}.
\end{equation}
Moreover without loss of generality we have $ h(0) = 0 $ and $ Q(0) = 0 \bbI $.

In the case of dimension $ d = 2 $, equations \eqref{velocity} and  \eqref{body:equ} simplify, in fact $ Q'(t)Q^T(t) $ is skew symmetric and it can be identify with a scalar quantity $ \omega(t)  $ through the relation
\begin{equation*} 
 Q'(t)Q^T(t) x = \omega(t) \times x^{\perp}
\end{equation*}
where $ x = (x_1, x_2) \in \bbR^2  $ and $ x^{\perp} = (-x_2,x_1)^T $. In particular \eqref{velocity} becomes 
\begin{equation*}
u_{\calS} = \ell(t) + \omega(t)  (x-h(t))^{\perp}.
\end{equation*}
The Newton's laws write  
\begin{align*}
m \ell'(t) = & \, - \oint_{\pS(t)} \Sigma(u_{\F}, p_{\F}) n \, ds  \\
\calJ \omega'(t) = & \,- \oint_{\pS(t)} (x-h(t))^{\perp} \cdot \Sigma(u_{\F}, \rho_{\F}) n \, ds 
\end{align*}
and  $ \calJ $ is time independent.

System \eqref{fluid:equ}-\eqref{body:equ}-\eqref{in:cond} has been widely studied. The first works on the existence of Hopf-Leray type weak solutions  are \cite{Jud74} and  \cite{Serre} where they consider the case $ \Omega = \bbR^3 $. These results were then extended in \cite{GLS00}-\cite{CST}-\cite{DE}-\cite{Fei}-\cite{Fei1}.  Uniqueness was shown in \cite{GS15}  in dimension two and  in \cite{MNR} in dimension three under Prodi-Serrin conditions. Regularity was studied in dimension three under Prodi-Serrin conditions in \cite{MNR1}.  Well-posedness of strong solutions in Hilbert space setting was proved in \cite{GM}-\cite{Tak}-\cite{TT} and in the Banach space setting in \cite{GGH}-\cite{MT}. Notice that similar results holds in the case the Navier-Slip boundary conditions are prescribed on $\partial \calS $, see \cite{PS}-\cite{GH}-\cite{IO1}-\cite{BGMN}-\cite{CNM}.

Let now introduce  a small parameter $ \eps > 0 $ and for $ \bar{x} \in \bbR^d $, let $ \calS^{in}_{\eps} \subset  B_{\eps}(\bar{x}) $ a sequence of initial positions of the rigid body. In this paper we study the dynamics of the rigid body when $ \eps $ converges to zero. Let us recall that it has already  been shown that the fluid does not interact with a small rigid body see for instance \cite{LT}, \cite{He:2D}, \cite{He:3D} and \cite{FRZ2} under some mild assumptions on $ m_{\eps} $ and $ \calS_{\eps}^{in}$. Similar results are available also for compressible fluid see \cite{ION}, \cite{FRZ} and Section 6 of \cite{FRZ2}. Moreover we proved in \cite{IO8} that under some lower bounds on the masses 
$$ m_{\eps}/\eps^{1/2} \lra +\infty  \text{ for d = 3} \quad \text{ and } \quad m_{\eps} \geq C > 0 \text{ for d = 2} $$
the small rigid body will move with constant initial velocity. In this paper we show that under the assumptions
\begin{gather}
m_{\eps}/| \calS^{in}_ {\eps}|^{1/3}  \lra 0 \quad \text{ and }\quad |\calS_{\eps}^{in}| / \eps^{9/2} \lra +\infty  \text{ if d = 3}, \nonumber \\
m_{\eps}/| \calS^{in}_ {\eps}|^{\delta}  \lra 0 \quad \text{ and } \quad |\calS_{\eps}^{in}| / \eps^{\tilde{\delta}} \lra +\infty  \text{ for some } \delta > 0 \text{ and } 0 < \tilde{\delta} < 1 \text{ if d = 2} \label{ass}
\end{gather}
 and appropriate convergence of the initial data, the small rigid body will follow the fluid velocity.

This result show a different behaviour respect to the case of an inviscid incompressible fluid where the small rigid body is not accelerate by the fluid, see Section 1.4 of \cite{Munnier}. In the proof of our result we take advantage of the viscous term in an  essential way. The idea is to use a relative energy inequality where we estimate
\begin{align*}
\int_{\bbR^d} \rho_{\eps}|u_{\eps}(t,.) - R_{\eps}(u(t,.))|^2 + 4 \nu & \int_{0}^{T} \int_{\bbR^d} |D(u_{\eps}- R_{\eps}(u))|^2  \leq \int_{\bbR^d} \rho_{\eps}|u^{in}_{\eps} - R_{\eps}(u^{in})|^2 \\ & +  C \int_0^T \int_{\bbR^d} \rho_{\eps}|u_{\eps} - R_{\eps}(u)|^2+ Rest_{\eps},
\end{align*}
where $ R_{\eps} $ is a restriction operator that maps solenoidal vector fields to solenoidal vector fields  that are rigid in $ \calS_{\eps} $, i.e. with zero symmetric gradient is the part of the domain occupied by the rigid body, and $ Rest_{\eps} $ a small reminder. 

To estimate $ \ell_{\eps} - u_{\eps}(t, h_{\eps}(t)) $, it is possible to study $ \int_{\bbR^d} \rho_{\eps}|u_{\eps}(t,.) - R_{\eps}(u(t,.))|^2 \geq m_{\eps}| \ell_{\eps} - u_{\eps}(t, h_{\eps}(t)) |^2 $ but it is not clear how to close the estimates. Instead we notice that in dimension three
\begin{equation*} 
|\calS_{\eps}^{in}|^{1/3} \|  \ell_{\eps} - u_{\eps}(t, h_{\eps}(t)) \|_{L^2(0,T)}^2 \leq \int_{0}^{T} \int_{\bbR^d} |D(u_{\eps}- R_{\eps}(u))|^2
\end{equation*}
and we look for uniform bounds for the term $ |\calS_{\eps}^{in}|^{-1/6} (D(u_{\eps}- R_{\eps}(u))) $.

Let now introduce the definition of weak solutions and the main result.


\section{Definition of weak solutions and main result}

In this section we recall the concept of  finite energy weak solutions for the system \eqref{fluid:equ}-\eqref{body:equ}-\eqref{in:cond}. Then we state the main result of the paper. 

 Let introduce some notations from \cite{He:3D}. Let denote by 
\begin{equation*}
 \rho = \chi_{\calF(t)} + \rho_{\calS} \chi_{\calS(t)}
\end{equation*}
which is the extension by $ 1 $ of the density of the rigid body. Here for a set $ A \subset \bbR^{d} $, we denote by $ \chi_{A} $ the indicator function of $ A $, more precisely $ \chi_A(x) = 1 $ for $ x \in A $ and $ 0 $ elsewhere. Similarly we define the global velocity 
\begin{equation*}
u = u_{\F} \chi_{\calF(t)} + u_{\calS} \chi_{\calS(t)} = u \chi_{\calF(t)} + \left(\ell(t) + \omega \times (x-h(t)) \right) \chi_{\calS(t)}.
\end{equation*}
Notice that if $ u^{in} \in L^{2}(\calF(0))$, then the compatibility conditions \eqref{comp:cond} on the initial data imply that $ \DIV( u^{in}) = 0 $ in an appropriate weak sense. After all this preliminary we introduce the definition of Hopf-Leray type weak solutions for the system \eqref{fluid:equ}-\eqref{body:equ}-\eqref{in:cond}.

\begin{Definition}

Let $ \calS^{in} $ and $ \rho_{\calS}^{in} $ the initial position and density of the rigid body, let $ (u^{in}_{\calF}, \ell^{in}, \omega^{in}) $ satisfying the hypothesis \eqref{comp:cond} and such that $ u^{in } \in L^2(\bbR^d) $. Then a triple $ ( u_{\F}, \ell, \omega ) $ is a Hopf-Leray weak solution for the system\eqref{fluid:equ}-\eqref{body:equ}-\eqref{in:cond} associated with the initial data  $ \calS^{in} $, $ \rho^{in}_{\calS} $, $ u^{in}_{\F} $, $\ell^{in}$ and $ \omega^{in} $, if 
\begin{itemize}

\item the functions $ u_{\F} $, $ \ell $ and $ \omega $ satisfy 
\begin{gather*}
\ell \in L^{\infty}(\bbR^+;\bbR^d), \quad \omega \in L^{\infty}(\bbR^+; \bbR^{2d-3}) \\
u_{\F} \in L^{\infty }(\bbR^+;L^{2}(\calF(t))) \cap L^2_{loc}(\bbR^+; H^{1}(\calF(t))), \quad \text{and} \quad u \in C_w(\bbR^+; L^2(\bbR^d));
\end{gather*}

\item the vector field $ u $ is divergence free in $ \bbR^d $ with $ D(u) = 0 $ in $ \calS(t) $; 

\item the vector field $ u $ satisfies the equation in the following sense:
\begin{equation*}
-\int_{\bbR^+} \int_{\bbR^d} \rho u \cdot ( \partial_t \varphi + (u \cdot \nabla) \varphi ) - 2 \nu D(u): D(\varphi) \, dx \, dt = \int_{\bbR^d} \tilde{\rho}^{in} u^{in} \cdot \varphi(0,.) \, dt,
\end{equation*}
for any test function $ \varphi \in C^{\infty}_{c}(\bbR^+ \times \bbR^d ) $ such that $ \DIV(\varphi) = 0 $ and $ D(\varphi) = 0 $ in $ \calS(t )$. 

\item The following energy inequality holds
\begin{equation*}
\int_{\bbR^{d}} \rho(t,.) |u(t,.)|^2 \, dx + 4 \nu \int_{0}^t \int_{\bbR^d} |D(u)| ^2\, dx dt \leq \int_{\bbR^d} \rho|u^{in}|^2,
\end{equation*}
for almost any time $ t \in \bbR^+ $.

\end{itemize}

\end{Definition}

The existence of weak solutions for the system \eqref{fluid:equ}-\eqref{body:equ}-\eqref{in:cond} is now classical and can be found for example in \cite{Fei}.

\begin{Theorem}
For initial data  $ \calS^{in} $, $ \rho^{in}_{\calS} $, $ u^{in}_{\F} $, $\ell^{in}$ and $ \omega^{in} $ satisfying the hypothesis \eqref{comp:cond} and such that $ u^{in } \in L^2(\bbR^d) $, there exist a Hopf-Leray weak solution $ ( u_{\F}, \ell, \omega ) $ of the system \eqref{fluid:equ}-\eqref{body:equ}-\eqref{in:cond}.
\end{Theorem}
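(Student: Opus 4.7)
The plan is to follow the by-now-standard global-formulation approach pioneered by San Mart\'in--Starovoitov--Tucsnak and Feireisl, viewing the fluid plus rigid body as a single non-Newtonian fluid whose viscosity degenerates to a rigidity constraint inside $\calS(t)$. I would first fix a short time horizon $T>0$ and construct approximate solutions by penalisation: replace $D(u)=0$ in $\calS(t)$ by adding a term $\frac{1}{\eta} \chi_{\calS_\eta(t)} D(u_\eta) : D(\varphi)$ to the variational formulation, where $\calS_\eta(t)$ is transported by a mollified velocity field $[u_\eta]_\delta$ so that the transport remains well-defined. For each fixed $\eta$ this yields a regularised Navier--Stokes type system on $\bbR^d$ whose solvability follows from a Galerkin scheme, exactly as in \cite{Fei}.

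Next I would derive uniform energy estimates. Testing the regularised equation against $u_\eta$ gives
\begin{equation*}
\frac12 \int_{\bbR^d} \rho_\eta |u_\eta(t,\cdot)|^2\,dx + 2\nu \int_0^t\!\!\int_{\bbR^d} |D(u_\eta)|^2 \,dx\,ds + \frac{2}{\eta}\int_0^t\!\!\int_{\calS_\eta(s)} |D(u_\eta)|^2 \,dx\,ds \leq \frac12 \int_{\bbR^d} \rho_\eta^{in} |u^{in}|^2 \,dx,
\end{equation*}
so that $u_\eta$ is bounded in $L^\infty_t L^2_x \cap L^2_t H^1_x$ and the penalisation term forces $D(u_\eta) \to 0$ in $L^2$ on $\calS_\eta(t)$. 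Since the transported density $\rho_\eta$ satisfies a transport equation with a smooth divergence-free velocity, one obtains $\rho_\eta \in L^\infty$ together with uniform continuity in time of its spatial distribution, giving compactness of $\calS_\eta(t)$ in Hausdorff distance.

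To pass to the limit $\eta \to 0$, the usual Aubin--Lions argument provides strong convergence of $u_\eta$ in $L^2_{loc}$ (one needs a fractional-in-time estimate for $\rho_\eta u_\eta$, obtained by testing against time-smoothed, divergence-free extensions of admissible test fields; the natural space is $W^{-s,q}$ for small $s$). Combining strong convergence of $u_\eta$, weak convergence of $D(u_\eta)$, and the penalisation bound, one identifies the limit $u$ as divergence-free with $D(u)=0$ on $\calS(t)$, hence of the form $\ell(t)+\omega(t)\times(x-h(t))$ there; this recovers $\ell$ and $\omega$ and passes to the limit in both the linear and nonlinear terms of the variational formulation. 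The energy inequality is obtained from the lower semicontinuity of the $L^2$ and $L^2 H^1$ norms under weak convergence.

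The main obstacle is the moving interface: the set of admissible test functions depends on the unknown $\calS(t)$, so one cannot pass to the limit directly with a fixed $\varphi$. I would handle this by working with a dense family of test fields obtained by prescribing smooth $(\ell_\varphi,\omega_\varphi)$ and solving a Bogovskii-type correction to make $\varphi$ divergence-free and rigid on $\calS_\eta(t)$, then proving that these corrections converge to their analogues on $\calS(t)$ as $\eta \to 0$. The Hausdorff convergence of $\calS_\eta(t)$, together with the smoothness of $\partial \calS^{in}$ preserved by the rigid motion, makes this construction robust. Once the variational identity is verified on this dense class, density yields the definition in full, completing the existence proof.
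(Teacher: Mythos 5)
The paper gives no proof of this theorem: it simply cites \cite{Fei} for the existence of Hopf--Leray weak solutions. Your penalisation sketch --- viewing the coupled system as a single density-variable fluid with a degenerate rigidity constraint on $\calS(t)$, constructing approximants by Galerkin, passing to the limit via energy estimates plus Aubin--Lions, and recovering the rigid structure of $u$ in $\calS(t)$ through the penalisation term --- is precisely the strategy of the cited reference and of the earlier works of San Mart\'in--Starovoitov--Tucsnak, so it is the ``same approach'' in the only sense available here. Two technical points you gloss over and that the cited proof must (and does) handle explicitly: first, the domain is $\bbR^d$, so one must work on an invading sequence of balls or otherwise manage the lack of compact support and Poincar\'e; second, the variational identity is tested against fields that are rigid on the \emph{a priori unknown} $\calS(t)$, and the density argument you invoke relies on the Hausdorff convergence $\calS_\eta(t)\to\calS(t)$ which itself requires the a.e.\ convergence of the rigid motion parameters $(\ell_\eta,\omega_\eta)$, so the order of the compactness arguments matters. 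These are fill-in-the-details issues, not gaps in the method.
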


Let now introduce a small parameter $ \eps > 0 $ that control the size of the rigid body. We assume that $ \calS^{in}_{\eps} \subset B_{0}(\eps) $. We study the dynamics of the rigid body as $ \eps $ goes to zero for solutions of the system \eqref{fluid:equ}-\eqref{body:equ}-\eqref{in:cond} under some assumptions on the initial data $ \rho_{\calS, \eps} $, $ u^{in}_{\F, \eps} $, $\ell^{in}_{\eps} $ and $ \omega^{in}_{\eps} $ and in particular we show that in the limit the small rigid body follows the fluid. This result can be resumed as follows. 

\begin{Theorem}
\label{theo:main}
Let $ (u_{\F \eps}, \ell_{\eps}, \omega_{\eps}) $ a sequence of Hopf-Leray weak solutions associated with the initial data $ \calS^{in}_{\eps} $, $ \rho^{in}_{\calS,\eps} $, $ u^{in}_{\F, \eps} $, $\ell^{in}_{\eps} $ and $ \omega^{in}_{\eps} $  satisfying the hypothesis \eqref{comp:cond} and such that $ u^{in }_{\calF_{\eps}, \eps} \in L^{2}(\calF^{in})$ and let $ (u, p ) $ a classical solution to the Navier-Stokes equations in $[0,T] \times \bbR^d$ with initial data $ u^{in } \in H^{k}(\bbR^d) $ for $  k > d/2 + 1 $. If we assume that 
\begin{itemize}

\item the rigid body $ \calS^{in}_{\eps} \subset B_{0}(\eps) $;

\item $ m_{\eps} $ and $ \calS^{in}_{\eps} $ satisfy \eqref{ass};
 
\item for $ d = 3$, $ \| u_{\eps}^{in} - u^{in}\|_{L^{2}(\calF_{\eps})}/ |\calS_{\eps}^{in}|^{1/3} \lra 0 $ and $ m_{\eps}|\ell_{\eps}^{in}-u^{in}(h_{\eps}(0))| / |\calS_{\eps}^{in}|^{1/3} \lra 0 $;

\item  for $ d = 2$, $ \| u_{\eps}^{in} - u^{in}\|_{L^{2}(\calF_{\eps})}/ |\calS_{\eps}^{in}|^{\delta} \lra 0 $ and $ m_{\eps}|\ell_{\eps}^{in}-u^{in}(h_{\eps}(0))| / |\calS_{\eps}^{in}|^{\delta} \lra 0 $ where $ \delta > 0 $ is the same of condition \eqref{ass}. 
\end{itemize}  
Then up to subsequence 
\begin{equation*}
h_{\eps} \cv h \quad \text{ in } H^{1}(0,T) \quad \text{and} \quad  \ell_{\eps} \lra u(t, h(t)) \quad \text{ in } L^{2}(0,T).
\end{equation*}
Moreover
\begin{equation*}
h(t) = h(0) + \int_{0}^{t} u(t,h(t)).
\end{equation*}

\end{Theorem}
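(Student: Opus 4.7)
The plan is to implement the relative energy scheme sketched in the introduction. For each $\eps$ I would construct a modification $R_\eps(u)$ of the smooth background solution $u$ that (i) is divergence free on $\bbR^d$, (ii) is rigid on $\calS_\eps(t)$ with translational velocity $u(t,h_\eps(t))$, (iii) equals $u$ outside a neighbourhood of $\calS_\eps(t)$, and (iv) differs from $u$ only on a set of volume comparable to $|\calS_\eps^{in}|$, with quantitative bounds on the $L^p$--norms of $R_\eps(u)-u$ and of $D(R_\eps(u))$ in terms of $\eps$. Such a restriction operator can be obtained by the usual cut-off and Bogovskii correction in the annulus between $\calS_\eps(t)$ and a slightly larger ball. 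Then $R_\eps(u)$ is an admissible test function in the weak formulation of Definition~1, and $u$ itself solves the smooth Navier--Stokes system pointwise, so subtracting the two equations produces an identity for $u_\eps-R_\eps(u)$.

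\textbf{Relative energy inequality.} Combining the energy inequality for $u_\eps$ with the equation satisfied by $R_\eps(u)$ (here one uses that $u$ is classical, so $\partial_t u$, $(u\cdot\nabla)u$ and $\nabla p$ make pointwise sense), I would derive
\begin{equation*}
\int_{\bbR^d}\rho_\eps |u_\eps - R_\eps(u)|^2(t)+4\nu\int_0^t\!\!\int_{\bbR^d}|D(u_\eps - R_\eps(u))|^2 \leq \mathcal I^{in}_\eps + C\!\int_0^t\!\!\int_{\bbR^d}\rho_\eps|u_\eps-R_\eps(u)|^2 + Rest_\eps,
\end{equation*}
where $\mathcal I^{in}_\eps$ is the initial relative energy and $Rest_\eps$ collects the errors coming from $\partial_t R_\eps(u)-\partial_t u$, the convective corrections, and the viscous terms involving $D(R_\eps(u))-D(u)$. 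All these error terms are supported in a set of volume $\lesssim |\calS_\eps^{in}|$, so by duality with the $L^\infty$ norms of $\nabla u$, $\partial_t u$ and $p$ (which are finite since $u$ is classical) they can be estimated by an $\eps$--explicit power of $|\calS_\eps^{in}|$ and of $\eps$, plus a small fraction of the dissipation term on the left. Once $Rest_\eps$ is absorbed, Gronwall gives a bound on the left-hand side.

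\textbf{Extracting the convergence of $\ell_\eps$.} The crucial step, as noted in the introduction, is to divide the inequality by $|\calS_\eps^{in}|^{1/3}$ (respectively $|\calS_\eps^{in}|^{\delta}$ in two dimensions). The dissipation term on the left controls, via Korn and the Sobolev embedding $H^1\hookrightarrow L^6$ in 3D (and $H^1\hookrightarrow L^q$ for every $q<\infty$ in 2D), the $L^2(\calS_\eps(t))$--norm of $u_\eps - R_\eps(u)$. Since $u_\eps$ is rigid on $\calS_\eps(t)$ and $R_\eps(u)$ has been engineered to equal $u(t,h_\eps(t))$ there, this gives
\begin{equation*}
|\calS_\eps^{in}|^{1/3}\int_0^T|\ell_\eps(t)-u(t,h_\eps(t))|^2\,dt \leq C\int_0^T\!\!\int_{\bbR^d}|D(u_\eps-R_\eps(u))|^2,
\end{equation*}
and an analogous bound involving $\omega_\eps$. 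The assumptions \eqref{ass} and the hypothesis on $\|u_\eps^{in}-u^{in}\|_{L^2}/|\calS_\eps^{in}|^{1/3}$ and $m_\eps|\ell_\eps^{in}-u^{in}(h_\eps(0))|/|\calS_\eps^{in}|^{1/3}$ are precisely what is needed to conclude that $\mathcal I^{in}_\eps/|\calS_\eps^{in}|^{1/3}\to 0$ and $Rest_\eps/|\calS_\eps^{in}|^{1/3}\to 0$; thus $\ell_\eps - u(\cdot,h_\eps(\cdot))\to 0$ in $L^2(0,T)$. Since $\|\ell_\eps\|_{L^\infty}$ is bounded by the energy estimate, one extracts a subsequence with $h_\eps\rightharpoonup h$ in $H^1(0,T)$; passing to the limit in $h_\eps'=\ell_\eps$ and using the uniform continuity of $u$ yields the integral representation of $h$.

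\textbf{Main obstacle.} The delicate point is the estimate of $Rest_\eps$: one must check that, after division by $|\calS_\eps^{in}|^{1/3}$, every error term is controlled by a product of a negative power of $\eps$ and a positive power of $|\calS_\eps^{in}|$ that is compatible with \eqref{ass}. The worst contributions are those coming from the pressure and from $\partial_t R_\eps(u)$, which require the Bogovskii corrector to be carefully localised on an annulus of width $\sim\eps$, producing factors of $\eps^{-1}$ that explain the exponent $9/2$ appearing in the three-dimensional condition $|\calS_\eps^{in}|/\eps^{9/2}\to\infty$. A second subtlety is the rotational contribution: one uses $\calS_\eps^{in}\subset B_0(\eps)$ to bound $|x-h_\eps|\le\eps$ on the body, so that the rigid rotation term $\omega_\eps\times(x-h_\eps)$ enters with a favourable $\eps$ factor; control of $m_\eps|\omega_\eps|^2$ by the relative energy then follows from $\calJ_\eps\gtrsim m_\eps\eps^2$ and a matching Sobolev/Korn estimate.
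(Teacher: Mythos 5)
Your proposal follows essentially the same route as the paper: build a rigid-on-$\calS_\eps$ restriction operator via cut-off plus a Bogovski\u{\i} corrector localised in an $\eps$-annulus, derive a relative energy inequality for $u_\eps - R_\eps(u)$, normalise by $|\calS_\eps^{in}|^{1/3}$ (resp. $|\calS_\eps^{in}|^{\delta}$ in 2D) so that the dissipation controls $\|\ell_\eps - u(\cdot,h_\eps(\cdot))\|_{L^2_t}$ through H\"older and Sobolev/Korn, absorb lower-order terms with Gr\"onwall, and pass to the limit in $h_\eps' = \ell_\eps$. One small remark: the separate bound on $\omega_\eps$ you sketch is not needed, since with constant density the cross term $\int_{\calS_\eps}(\ell_\eps - \bar u^\eps)\cdot(\omega_\eps\times(x-h_\eps))$ vanishes and the rotational part only adds a nonnegative contribution, which is exactly how the paper obtains $|\calS_\eps^{in}|^{1/2}\|\ell_\eps - u(\cdot,h_\eps)\|_{L^2_t}\le\|u_\eps - R_\eps(u)\|_{L^2_t L^2(\calS_\eps)}$ without ever controlling $\omega_\eps$.
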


Let us notice that in dimension two the time of existence of regular solution is $ T = + \infty $. In this case the convergence of $ h_{\eps} $ and $ \ell_{\eps} $ holds in any compact interval. In dimension three the existence of global regular solutions is an open problem but there exist local in time solutions and they are global in time for small initial data. 

\section{Proof of the main result}

The prove of Theorem \ref{theo:main} is based on a relative energy inequality that it is stated in Lemma \ref{main:lemma}. We present the proof of Lemma \ref{main:lemma} in a separate section because it  is technical. The plan for this section is to recall the definition of restriction operator $ R_{\eps} $, to state Lemma \ref{main:lemma} and prove Theorem \ref{theo:main}. In the remaining part of the paper we set $ \nu = 1 $ to simplify the notation.  

\subsection{The restriction operator}

Let us introduce a restriction operator introduced in \cite{FRZ}. Consider a cut off $ \eta \in C^{\infty}_{c}(B_2(0)) $ such that $ 0 \leq \eta \leq 1 $ and $ \eta = 1 $ in $ B_1(0) $, moreover we introduce $ \eta_{\eps}(x) = \eta(x/ \eps) $. The restriction operator is defined as  
\begin{align*}
R_{\eps}[\varphi](t,x) = & \, \eta_{\eps}(x-h_{\eps}(t)) \varphi(t,x) + (1-\eta_{\eps}(x-h_{\eps}(t))) \varphi(t,h_{\eps}(t))  \\  & \, + B_{\eps}[\DIV(\eta_{\eps}(x-h_{\eps}(t)) \varphi(t,x) + (1-\eta_{\eps}(x-h_{\eps}(t))) \varphi(t,h_{\eps}(t)))],
\end{align*}
where $ B_{\eps}[f](x) =\eps  B_1[f(\eps y)](x/\eps) $ and $ B_1 $ is a Bogovski\u{\i} in $ B_{2}(0) \setminus B_{1}(0) $. See Appendix \ref{app:B} for more details.

To simplify the notation for any regular enough function $ \varphi $, we denote by by $ \bar{\varphi}^{\eps}(t) = \varphi(t, h_{\eps}(t))$. This allows us to rewrite the restriction operator in the more compact form  
\begin{align*}
R_{\eps}[\varphi] = & \, \eta_{\eps} \varphi + (1-\eta_{\eps}) \bar{\varphi}^{\eps}  + B_{\eps}[\DIV(\eta_{\eps} \varphi + (1-\eta_{\eps})\bar{\varphi}^{\eps})].
\end{align*}
In the following lemma we resume some properties of  $ R_{\eps} $.

\begin{Lemma}
The restriction operator $ R_{\eps} $ has the following properties. For any $ \varphi \in C^{0}(\bbR^d) $ such that $ \DIV(\varphi) =  0 $ and for $ p \in [1,+\infty] $,
\begin{gather*}
\DIV(R_{\eps}[\varphi]) = 0 \quad \text{ in } \bbR^{d}, \quad R_{\eps}[\varphi] = \varphi(t,h_{\eps}(t)) \quad \text{ in } \calS_{\eps}(t).
\end{gather*}
and
\begin{equation}
\label{ine:restr:lp}
\| R_{\eps}(\varphi) - \varphi  \|_{L^{p}(\bbR^{d})} \leq C \eps^{d/p}\|\varphi \|_{L^{\infty}(\bbR^d)} 
\end{equation}

Moreover if $  \varphi \in W^{1,\infty}_x(\bbR^d) $ such that $ \DIV(\varphi) =  0 $ and if  $ p \in (1,+\infty) $,
\begin{equation}
\label{ine:restr:grad}
\| \nabla R_{\eps}(\varphi) -\nabla  \varphi  \|_{L^{p}(\bbR^{d})} \leq C_p \eps^{d/p}\|\varphi \|_{W^{1,\infty}(\bbR^d)}.
\end{equation}

\end{Lemma}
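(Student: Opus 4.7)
The plan is to verify each of the four claims by unpacking the definition of $R_{\eps}$ and combining elementary estimates on the cutoff $\eta_{\eps}$ with the scaling and mapping properties of the Bogovski\u{\i} operator $B_{\eps}$ on the annulus $B_{2\eps}(h_{\eps})\setminus B_{\eps}(h_{\eps})$. Throughout I would use the natural splitting $R_{\eps}[\varphi]=\eta_{\eps}\varphi+(1-\eta_{\eps})\bar{\varphi}^{\eps}+B_{\eps}[g_{\eps}]$ where $g_{\eps}=\DIV(\eta_{\eps}\varphi+(1-\eta_{\eps})\bar{\varphi}^{\eps})$, and keep track of the thin support of $\nabla\eta_{\eps}$.

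For the algebraic properties, I would first observe that, since $\DIV\varphi=0$ and $\bar{\varphi}^{\eps}$ is constant in $x$, a direct calculation yields $g_{\eps}=\nabla\eta_{\eps}\cdot(\varphi-\bar{\varphi}^{\eps})$, a function supported in the annulus. An integration by parts, using that $\eta_{\eps}$ vanishes on $\partial B_{2\eps}(h_{\eps})$ and $\DIV\varphi=0$, shows $\int g_{\eps}\,dx=0$, so $B_{\eps}[g_{\eps}]$ is well defined and its defining property $\DIV B_{\eps}[g_{\eps}]=g_{\eps}$ cancels the remaining divergence. For the identification in $\calS_{\eps}(t)\subset B_{\eps}(h_{\eps})$, one reads off that $\eta_{\eps}\equiv 1$ while $B_{\eps}[\cdot]\equiv 0$ there (its range lies in the annulus), leaving the constant value $\varphi(t,h_{\eps}(t))$.

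For the $L^{p}$ bound \eqref{ine:restr:lp}, I would estimate the two parts of $R_{\eps}[\varphi]-\varphi$ separately. The pointwise part is supported in $B_{2\eps}(h_{\eps})$ with $L^{\infty}$-norm bounded by $2\|\varphi\|_{L^{\infty}}$, hence $L^{p}$-norm of order $\eps^{d/p}\|\varphi\|_{L^{\infty}}$. For the Bogovski\u{\i} piece I would exploit the scaling identity $B_{\eps}[f](x)=\eps B_{1}[f(\eps\cdot)](x/\eps)$: a change of variables together with the standard bound $\|B_{1}[g]\|_{L^{p}}\leq C\|g\|_{L^{p}}$ gives $\|B_{\eps}[f]\|_{L^{p}}\leq C\eps\|f\|_{L^{p}}$. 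Since $|\nabla\eta_{\eps}|\leq C/\eps$ on the annulus, $\|g_{\eps}\|_{L^{p}}\lesssim \eps^{d/p-1}\|\varphi\|_{L^{\infty}}$, and the prefactor $\eps$ restores the claimed rate.

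For the gradient estimate \eqref{ine:restr:grad}, differentiating the decomposition produces three contributions: $\nabla\eta_{\eps}(\varphi-\bar{\varphi}^{\eps})$, $\eta_{\eps}\nabla\varphi$ and $\nabla B_{\eps}[g_{\eps}]$. The key observation is the Lipschitz compensation $|\varphi-\bar{\varphi}^{\eps}|\leq 2\eps\|\nabla\varphi\|_{L^{\infty}}$ on $B_{2\eps}(h_{\eps})$, which absorbs the $\eps^{-1}$ blow-up of $\nabla\eta_{\eps}$, making the first two terms $L^{\infty}$-bounded by $C\|\varphi\|_{W^{1,\infty}}$ on a support of measure $\lesssim\eps^{d}$ and thus giving $\eps^{d/p}\|\varphi\|_{W^{1,\infty}}$ in $L^{p}$. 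The third term is where the restriction $p\in(1,+\infty)$ becomes essential: classical Bogovski\u{\i} theory on $B_{2}\setminus B_{1}$ provides $\|\nabla B_{1}[g]\|_{L^{p}}\leq C_{p}\|g\|_{L^{p}}$, and a rescaling computation shows $\|\nabla B_{\eps}[f]\|_{L^{p}}\leq C_{p}\|f\|_{L^{p}}$ with a constant independent of $\eps$. Applied to $\|g_{\eps}\|_{L^{p}}\lesssim \eps^{d/p}\|\varphi\|_{W^{1,\infty}}$ (again combining the Lipschitz estimate with $|\nabla\eta_{\eps}|\leq C/\eps$), this closes the bound. The main technical point is the scaling analysis of $B_{\eps}$: one must carefully track powers of $\eps$ from both the rescaling of the input and the change of variables in the $L^{p}$ norms; everything else is routine assembly of H\"older and Lipschitz bounds on the thin annular support of $\nabla\eta_{\eps}$.
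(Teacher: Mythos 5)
Your overall strategy mirrors the paper's: split $R_{\eps}[\varphi]-\varphi$ into a pointwise part supported on $B_{2\eps}(h_{\eps}(t))$ and a Bogovski\u{\i} correction, estimate the first by its $L^\infty$-bound times $|B_{2\eps}|^{1/p}\sim\eps^{d/p}$, rewrite the argument of $B_{\eps}$ as $\nabla\eta_{\eps}\cdot(\varphi-\bar{\varphi}^{\eps})$ using $\DIV\varphi=0$, and track powers of $\eps$ through the rescaling. The zero-mean verification for $g_{\eps}$ and the observation that $B_{\eps}$ vanishes on $\calS_{\eps}(t)$ are both correct. The gradient estimate is also handled the same way as in the paper: the Lipschitz compensation $|\varphi-\bar{\varphi}^{\eps}|\leq C\eps\|\nabla\varphi\|_{L^\infty}$ cancels the $\eps^{-1}$ from $\nabla\eta_{\eps}$, and the restriction to $p\in(1,\infty)$ is correctly tied to the classical $W^{1,p}$ Bogovski\u{\i} bound.

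The one genuine gap is your reliance on ``the standard bound $\|B_{1}[g]\|_{L^{p}}\leq C\|g\|_{L^{p}}$'' for \emph{all} $p\in[1,\infty]$ in the proof of \eqref{ine:restr:lp}. That bound is standard only for $p\in(1,\infty)$, where it follows from $B_{1}:\tilde{L}^{p}\to W^{1,p}_{0}$ and Poincar\'e; the statement you need covers $p=1$ and $p=\infty$ as well, and at those endpoints the $W^{1,p}$ Bogovski\u{\i} estimate fails. The paper works around this: it proves the bound at $p=1$ and at $p=\infty$ explicitly and then \emph{interpolates}. At $p=1$ it H\"olders the Bogovski\u{\i} term up to $L^{d/(d-1)}$ (gaining one factor $\eps$ from the thin annular support) and then uses an $L^{1}\to L^{d/(d-1)}$-type bound for $B_{\eps}$; at $p=\infty$ it invokes the special property $\|B_{1}[f]\|_{L^{\infty}}\leq\|f\|_{L^{2}}$ postulated in the Appendix (which is why the paper insists on choosing a particular Bogovski\u{\i} operator rather than an arbitrary one). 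As written, your argument does not close the lemma at the endpoints of $[1,\infty]$; you would either have to reproduce the paper's interpolation between $p=1$ and $p=\infty$, or explicitly appeal to the non-generic properties of the chosen $B_{1}$ at $p\in\{1,\infty\}$ rather than calling them standard. A further minor slip: when you list the three contributions to $\nabla R_{\eps}[\varphi]$ you write $\eta_{\eps}\nabla\varphi$ where you actually need $\eta_{\eps}\nabla\varphi-\nabla\varphi$; only after this subtraction is the term compactly supported on the annulus, which is what the measure-of-support argument requires.
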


\begin{proof}
Let us show estimates \eqref{ine:restr:lp}-\eqref{ine:restr:grad}. In the following let us use the notation $L^p_x$ to denote $ L^{p}(\bbR^d)$. We have
\begin{equation*}
\| R_{\eps}(\varphi) - \varphi  \|_{L^{p}_x} \leq \| R_{\eps}(\varphi) - \eta_{\eps} \varphi  \|_{L^{p}_x} + \| \eta_{\eps}\varphi  - \varphi  \|_{L^{p}_x} \leq \| R_{\eps}(\varphi) - \eta_{\eps} \varphi  \|_{L^{p}_x} + C\eps^{d/p}\| \varphi \|_{L^{\infty}_x}.
\end{equation*}
It is then enough to show 
\begin{equation}
\label{est:p} 
\| R_{\eps}(\varphi) - \eta_{\eps} \varphi  \|_{L^{p}_x} \leq C \eps^{d/p}\| \varphi \|_{L^{\infty}_x}.
\end{equation}
Using the definition of $ R_{\eps} $ and the fact that $ 1- \eta_{\eps} $ and $ B_{\eps} $ are supported in a ball of radius $ 2 \eps $, we have
\begin{align*}
\| R_{\eps}(\varphi) - \eta_{\eps} \varphi  \|_{L^{1}_x}  = & \, \|  (1-\eta_{\eps}) \bar{\varphi}^{\eps}  +  B_{\eps}[\DIV(\eta_{\eps} \varphi +  (1-\eta_{\eps})\bar{\varphi}^{\eps})] \|_{L^{1}_x} \\
\leq & \, C \eps^{d}\| \varphi \|_{L^{\infty}_x}  + C \eps \|  B_{\eps}[\DIV(\eta_{\eps} \varphi +  (1-\eta_{\eps})\bar{\varphi}^{\eps})] \|_{L^{n/(n-1)}_x} \\
\leq &  \, C \eps^{d}\| \varphi \|_{L^{\infty}_x}  + C \eps \|  \DIV(\eta_{\eps} \varphi +  (1-\eta_{\eps})\bar{\varphi}^{\eps}) \|_{L^{1}_x}
\end{align*}
Using the fact that $ \varphi $ is divergence free we can rewrite
\begin{equation*}
\DIV(\eta_{\eps} \varphi +  (1-\eta_{\eps})\bar{\varphi}^{\eps}) = \nabla \eta_{\eps}(\varphi- \bar{\varphi}^{\eps}).
\end{equation*}
This allows us to  deduce
\begin{align}
\| R_{\eps}(\varphi) - \eta_{\eps} \varphi  \|_{L^{1}_x}  \leq  &  \, C \eps^{d}\| \varphi \|_{L^{\infty}_x}  + C \eps \|   \nabla \eta_{\eps}(\varphi- \bar{\varphi}^{\eps}) \|_{L^{1}_x} \nonumber \\
\leq & \, C \eps^{d}\| \varphi \|_{L^{\infty}_x} + C \eps \|\nabla \eta_{\eps} \|_{L^{1}}\| \varphi \|_{L^{\infty}_x}  \nonumber \\
\leq & \, C \eps^{d}\| \varphi \|_{L^{\infty}_x}  . \label{est:1}
\end{align}
Similarly
\begin{align}
\| R_{\eps}(\varphi) - \eta_{\eps} \varphi  \|_{L^{\infty}_x}  = & \, \|  (1-\eta_{\eps}) \bar{\varphi}^{\eps}  +  B_{\eps}[\DIV(\eta_{\eps} \varphi +  (1-\eta_{\eps})\bar{\varphi}^{\eps})] \|_{L^{\infty}_x} \nonumber \\
\leq & \, C \| \varphi \|_{L^{\infty}_x}  + C \|  B_{\eps}[\DIV(\eta_{\eps} \varphi +  (1-\eta_{\eps})\bar{\varphi}^{\eps})] \|_{L^{\infty}_x} \nonumber  \\
\leq &  \, C \| \varphi \|_{L^{\infty}_x}  + C  \|  \DIV(\eta_{\eps} \varphi +  (1-\eta_{\eps})\bar{\varphi}^{\eps}) \|_{L^{d}_x}\nonumber  \\
\leq & \, C \| \varphi \|_{L^{\infty}_x}  +  C \|\nabla \eta_{\eps} \|_{L^{d}}\| \varphi \|_{L^{\infty}_x} \nonumber \\
\leq & \, C \| \varphi \|_{L^{\infty}_x}. \label{est:infty}
\end{align}
From \eqref{est:1}-\eqref{est:infty} and interpolation inequality, we deduce \eqref{est:p}.

To show estimate \eqref{ine:restr:grad}, it is enough to show 
\begin{equation}
\label{est:p:nabla} 
\| \nabla R_{\eps}(\varphi) - \eta_{\eps} \nabla \varphi  \|_{L^{p}_x} \leq C_p \eps^{d/p}\| \varphi \|_{L^{\infty}_x}.
\end{equation}
Notice that 
\begin{align*}
 \nabla R_{\eps}(\varphi) - \eta_{\eps} \nabla \varphi = \nabla \eta_{\eps}(\varphi - \bar{\varphi}^{\eps}) +  \nabla B_{\eps}[\nabla \eta_{\eps}(\varphi - \bar{\varphi}^{\eps}) ].
\end{align*}
and 
\begin{align*}
\| \eta_{\eps}(\varphi - \bar{\varphi}^{\eps})  \|_{L^{p}_x} \leq & \,  \left\|  |.-h_{\eps}(t)| \eta_{\eps}(.)\frac{\varphi(.) - \bar{\varphi}^{\eps}}{|.-h_{\eps}(t)|}    \right\|_{L^{p}_x} \\
\leq & \|  |.-h_{\eps}(t)| \eta_{\eps}(.) \|_{L^{p}_x} \left\|   \frac{\varphi(.) - \bar{\varphi}^{\eps}}{|.-h_{\eps}(t)|}    \right\|_{L^{\infty}_x} \\
\leq & \, C \eps^{d/p} \| \varphi \|_{W^{1,\infty}_x}
\end{align*}
The above computation and estimate allow us to deduce
\begin{align*}
 \| \nabla R_{\eps}(\varphi) - \eta_{\eps} \nabla \varphi  \|_{L^{p}_x} \leq & \, \| \nabla \eta_{\eps}(\varphi - \bar{\varphi}^{\eps}) \|_{L^{p}_x} + \| \nabla B_{\eps}[\nabla \eta_{\eps}(\varphi - \bar{\varphi}^{\eps}) ]\|_{L^{p}_x} \\
 \leq & \, C_p \| \nabla \eta_{\eps}(\varphi - \bar{\varphi}^{\eps}) \|_{L^{p}_x} \\
 \leq & \, C_p \eps^{d/p} \| \varphi \|_{W^{1,\infty}_x}.
\end{align*}

\end{proof}

\begin{Remark}  
Let us notice that in the proof of the above lemma we show also inequality \eqref{est:p} that reads
\begin{equation}
\label{est:lp:2}
\| R_{\eps}(\varphi) - \eta_{\eps} \varphi  \|_{L^{p}(\bbR^{d})} \leq C \eps^{d/p}\|\varphi \|_{L^{\infty}_x} 
\end{equation}
for any $ \varphi \in C^{0}(\bbR^d) $ such that $ \DIV(\varphi) =  0 $ and for $ p \in [1,+\infty] $ and estimate  \eqref{est:p:nabla} that reads
\begin{equation}
\label{est:nabla:2}
\| \nabla R_{\eps}(\varphi) -\eta_{\eps} \nabla  \varphi  \|_{L^{p}(\bbR^{d})} \leq C \eps^{d/p}\|\varphi \|_{W^{1,\infty}_x}.
\end{equation}
 for any $  \varphi \in W^{1,\infty}_x(\bbR^d) $ such that $ \DIV(\varphi) =  0 $ and if  $ p \in (1,+\infty) $.

\end{Remark}

We will now present the relative energy inequality.

\subsection{Relative energy inequality}

We will use the above restriction operator to deduce a relative energy inequality.

\begin{Lemma}
\label{main:lemma}
Under the hypothesis of Theorem \ref{theo:main}, we have
\begin{align*}
\int_{\bbR^d} \rho_{\eps}|u_{\eps}(t,.) - R_{\eps}(u(t,.))|^2 + 4 & \int_{0}^{T} \int_{\bbR^d} |D(u_{\eps}- R_{\eps}(u))|^2  \leq \int_{\bbR^d} \rho_{\eps}|u^{in}_{\eps} - R_{\eps}(u^{in})|^2 \\ & +  C \int_0^T \int_{\bbR^d} \rho_{\eps}|u_{\eps} - R_{\eps}(u)|^2+ Rest_{\eps},
\end{align*}
where 
$$  |Rest_{\eps}| \leq C ( m_{\eps} + |\calS_{\eps}|+ \eps^{3/2} + \eps^{3}|\calS^{in}_{\eps}|^{-1/3}), \quad \text{ for } d = 3, $$
$$  |Rest_{\eps}| \leq C ( m_{\eps} + |\calS_{\eps}|+ \eps^{\tilde{\delta}} + \eps^{1+ \tilde{\delta}} |\calS^{in}_{\eps}|^{-\delta}), \quad \text{ for } d = 2, $$
with $ C $ independent of $ \eps $.

\end{Lemma}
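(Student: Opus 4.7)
The argument is a relative-energy (weak–strong) computation. I expand
\[
\int_{\bbR^d}\rho_\eps|u_\eps-R_\eps(u)|^2 = \int_{\bbR^d}\rho_\eps|u_\eps|^2 - 2\int_{\bbR^d}\rho_\eps u_\eps\cdot R_\eps(u) + \int_{\bbR^d}\rho_\eps|R_\eps(u)|^2
\]
and treat each of the three pieces between $0$ and $t$. The first is controlled by the energy inequality from the definition of weak solution. For the cross term, the key point is that $R_\eps(u)$ is admissible in the weak momentum equation for $(u_\eps,\ell_\eps,\omega_\eps)$, since the restriction operator preserves divergence-freeness and produces a vector field which is constant (hence rigid) on $\calS_\eps(t)$. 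Testing against it yields an identity involving $\partial_t R_\eps(u)$, $(u_\eps\cdot\nabla)R_\eps(u)$ and $D(u_\eps):D(R_\eps(u))$. For the third piece, I use that $\rho_\eps$ is transported by $u_\eps$, which gives $\frac{d}{dt}\int\rho_\eps|R_\eps(u)|^2 = 2\int\rho_\eps R_\eps(u)\cdot[\partial_t R_\eps(u)+(u_\eps\cdot\nabla)R_\eps(u)]$; I then decompose $R_\eps(u)=u+(R_\eps(u)-u)$ and apply the classical equation $\partial_t u+(u\cdot\nabla)u-\Delta u+\nabla p=0$ on the leading contribution, the pressure dropping out because both $R_\eps(u)$ and $u_\eps$ are divergence free on all of $\bbR^d$.

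After summation, the quadratic-in-$D$ contributions combine into $-4\int_0^T\!\int|D(u_\eps-R_\eps(u))|^2$ plus a residual $-4\int D(u_\eps-R_\eps(u)):D(R_\eps(u)-u)$ which is absorbed using Young's inequality together with \eqref{ine:restr:grad}. The remaining terms split into three families: (i) a Gronwall piece $C\int\rho_\eps|u_\eps-R_\eps(u)|^2$ with constant determined by $\|\nabla u\|_{L^\infty}$; (ii) density-correction terms $(\rho_\calS-1)\int_{\calS_\eps}(\cdots)$, bounded by $C(m_\eps+|\calS_\eps|)\|u\|_{W^{1,\infty}}^2$; and (iii) errors involving $u-R_\eps(u)$, $\nabla u-\nabla R_\eps(u)$ and $\partial_t u-\partial_t R_\eps(u)$, controlled via \eqref{ine:restr:lp}, \eqref{est:lp:2}, \eqref{ine:restr:grad} and \eqref{est:nabla:2}. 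Choosing $p=2$ in dimension three yields the $\eps^{3/2}$ contribution, while in dimension two one chooses $p=2/\tilde\delta$ and obtains the $\eps^{\tilde\delta}$ contribution. The last summand $\eps^3|\calS_\eps^{in}|^{-1/3}$ (respectively $\eps^{1+\tilde\delta}|\calS_\eps^{in}|^{-\delta}$) comes from the Bogovski\u{\i} corrector inside $R_\eps(u)$: its gradient has poor $L^2$ scaling on the annulus $B_{2\eps}\setminus B_\eps$, and the natural pairing with $|\calS_\eps^{in}|^{-1/6}D(u_\eps-R_\eps(u))$ highlighted in the introduction forces the lower bound on $|\calS_\eps^{in}|$ in \eqref{ass}.

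The main obstacle will be the handling of $\partial_t R_\eps(u)$: the restriction operator depends on time both through the moving cutoff $\eta_\eps(\cdot-h_\eps(t))$ and through the value $\bar u^\eps(t)=u(t,h_\eps(t))$, so $\partial_t R_\eps(u)$ produces terms of the form $-\ell_\eps\cdot\nabla\eta_\eps(\cdot-h_\eps(t))(u-\bar u^\eps)$ plus their Bogovski\u{\i} counterparts. Since $\ell_\eps$ carries no smallness and is only uniformly bounded (via $m_\eps|\ell_\eps|^2\le\int\rho_\eps|u_\eps|^2$), these contributions must be paired either with $u_\eps-R_\eps(u)$ to feed the Gronwall loop, or with the pointwise bound $|u-\bar u^\eps|\leq C\|u\|_{W^{1,\infty}}\eps$ on $\mathrm{supp}(\nabla\eta_\eps)$ to recover the required smallness. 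A secondary difficulty is the convective term $\int\rho_\eps u_\eps\cdot(u_\eps\cdot\nabla)R_\eps(u)$, which after exploiting $\DIV u_\eps=0$ and $D(R_\eps(u))=0$ on $\calS_\eps$ has to be rewritten as an integral of $(u_\eps-R_\eps(u))\otimes(u_\eps-R_\eps(u)):D(R_\eps(u))$ modulo harmless pieces, before it can be controlled by the Gronwall loop.
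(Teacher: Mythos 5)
Your overall strategy is the same as the paper's: expand $\int\rho_\eps|u_\eps-R_\eps(u)|^2$ into three pieces, control the first with the energy inequality for $u_\eps$, the cross term by testing the weak momentum equation with $R_\eps(u)$, and complete the square to produce $4\int_0^T\int|D(u_\eps-R_\eps(u))|^2$ on the left. Your handling of the third piece (differentiating $\int\rho_\eps|R_\eps(u)|^2$ in time via transport of $\rho_\eps$ and then invoking the Navier--Stokes equation for $u$) is a minor variant of the paper's route, which instead invokes the energy identity for $u$ and keeps $\int\rho_\eps|R_\eps(u)|^2-\int|u|^2$ and $\int\rho_\eps|u^{in}|^2-\int\rho_\eps^{in}|R_\eps(u^{in})|^2$ as directly estimable residuals ($Rest_\eps^2$); both lead to the same structure. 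You also correctly identify the genuine obstacle, namely that $\partial_t R_\eps(u)$ produces $\ell_\eps\cdot\nabla\eta_\eps(u-\bar u^\eps)$ terms with no smallness from $\ell_\eps$, which must be fed into the Gronwall loop or paired with the smallness of $u-\bar u^\eps$ on $\mathrm{supp}(\nabla\eta_\eps)$.

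Two points in the proposal are off and would need repair. First, the pressure does \emph{not} drop out. When the equation $\partial_t u = -u\cdot\nabla u + \Delta u -\nabla p$ is substituted inside the cross term via the $\eta_\eps\partial_t u$ piece of $\partial_t R_\eps(u)$, the resulting pairing is against $\eta_\eps u_\eps$, which is not divergence free: integrating by parts leaves $\int_0^T\int u_\eps\cdot\nabla\eta_\eps\,(p-\bar p^\eps)$, the term the paper isolates as $J_5$. Similarly $\rho_\eps u$ is not solenoidal, so in your version of the third-piece computation the surviving contribution is $(\rho_\calS-1)\int_{\calS_\eps}u\cdot\nabla p$, harmless but nonzero. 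Claiming the pressure vanishes would lead you to miss $J_5$; fortunately that piece is $O(\eps^{3/2})$ and does not change the final bound, but it must be estimated.

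Second, the factor $|\calS_\eps^{in}|^{-1/3}$ (or, in what the paper actually proves, $|\calS_\eps^{in}|^{-1/6}$) in $Rest_\eps$ does \emph{not} come from a poor $L^2$ scaling of the Bogovski\u{\i} corrector; indeed the scaled Bogovski\u{\i} operator $B_\eps$ is bounded uniformly in $\eps$. It comes from the a priori control $\|\ell_\eps\|_{L^2_t}\leq C|\calS_\eps^{in}|^{-1/6}$ (in $d=3$), which follows from $|\calS_\eps^{in}|^{1/2}\|\ell_\eps\|_{L^2_t}\leq\|u_\eps\|_{L^2_t(L^2(\calS_\eps))}\leq|\calS_\eps^{in}|^{1/3}\|u_\eps\|_{L^2_t(L^6)}$. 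Every term in $\partial_tR_\eps(u)$ carrying an $\ell_\eps$ factor and a measure factor $\eps^{5/2}$ from the annulus ($I_2$, $I_4$, $I_5$ in the paper's notation) then produces $\eps^{5/2}|\calS_\eps^{in}|^{-1/6}$. This is a useful correction for the two-dimensional argument as well, where the corresponding factor is $\|\ell_\eps\|_{L^2_t}\lesssim|\calS_\eps^{in}|^{-\delta}$ via Lemma \ref{lemma:sob:emb}, not any defect of $B_\eps$.
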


The above lemma is the key estimate to deduce Theorem \ref{theo:main}. 

\subsection{Proof of Theorem \ref{theo:main}}

Let us show Theorem  \ref{theo:main} with the help of Lemma \ref{main:lemma}.

\begin{proof}[Proof of Theorem \ref{theo:main}]
Let us show Theorem \ref{theo:main} in dimension  three and explain at the end how to adapted in dimension two. First of all let notice that
\begin{align*}
|\calS_{\eps}^{in}|^{1/2} \| \ell_{\eps} - u(t, h_{\eps}(t)) \|_{L^2_t} \leq & \, \| u_{\eps} - R_{\eps}(u)\|_{L^{2}_t(L^2(\calS_{\eps}(t)))} \\
\leq & \,  |\calS_{\eps}^{in}|^{1/3}  \| u_{\eps} - R_{\eps}(u)\|_{L^{2}_t(L^6(\calS_{\eps}(t)))}  \\
\leq & \,  C |\calS_{\eps}^{in}|^{1/3}  \| D (u_{\eps} - R_{\eps}(u))\|_{L^{2}_t(L^2(\bbR^3 ))},
\end{align*}
where we used that $ \rho_{\eps} $ is constant in $ \calS_{\eps} $ for any fixed $ \eps $ to deduce the first inequality. We can rewrite the above inequality as 
\begin{equation*}
|\calS_{\eps}^{in}|^{1/3} \| \ell_{\eps} - u(t, h_{\eps}(t)) \|_{L^2_t}^2 \leq  C \| D (u_{\eps} - R_{\eps}(u))\|_{L^{2}_t(L^2(\bbR^3(t)))}^2.
\end{equation*}    
From Lemma \ref{main:lemma}, 
\begin{align*}
|\calS_{\eps}^{in}|^{-1/3}& \int_{\bbR^3} \rho_{\eps}|u_{\eps}(t,.) - R_{\eps}(u(t,.))|^2 + C^{-1} \| \ell_{\eps} - u(t, h_{\eps}(t)) \|_{L^2_t}^2 \\
 \leq & \, |\calS_{\eps}^{in}|^{-1/3}\int_{\bbR^3} \rho_{\eps}|u_{\eps}(t,.) - R_{\eps}(u(t,.))|^2  + 4 |\calS_{\eps}^{in}|^{-1/3} \int_{0}^{T} \int_{\bbR^3} |D(u_{\eps}- R_{\eps}(u))|^2  \\
 \leq & \, |\calS_{\eps}^{in}|^{-1/3} \int_{\bbR^3} \rho_{\eps}|u^{in}_{\eps} - R_{\eps}(u^{in})|^2 +  C |\calS_{\eps}^{in}|^{-1/3} \int_0^T \int_{\bbR^3} \rho_{\eps}|u_{\eps} - R_{\eps}(u)|^2 \\
 & \, + |\calS_{\eps}^{in}|^{-1/3} Rest_{\eps}. 
\end{align*}
Gr\"omwall's inequality implies that
\begin{align}
\label{main:grom:est}
\| \ell_{\eps} - u(t, h_{\eps}(t)) \|_{L^2_t}^2 \leq \left(  |\calS_{\eps}^{in}|^{-1/3} \int_{\bbR^3} \rho_{\eps}|u^{in}_{\eps} - R_{\eps}(u^{in})|^2  + |\calS_{\eps}^{in}|^{-1/3} |Rest_{\eps}|\right) e^{TC}.
\end{align}
Notice that 
\begin{align*}
\int_{\calF_{\eps}(t) } |u^{in}_{\eps} - R_{\eps}(u^{in})|^2 \leq & \, \int_{\calF_{\eps}(t) } |u^{in}_{\eps} - u^{in}|^2 - 2 \int_{\calF_{\eps}(t) }u_{\eps}^{in}\cdot (u^{in} - R_{\eps}(u^{in})) \\
& \,  + \int_{\calF_{\eps}(t) } (u^{in} - R_{\eps}(u^{in})) \cdot R_{\eps}(u^{in}) + \int_{\calF_{\eps}(t) } u^{in}\cdot (u^{in} - R_{\eps}(u^{in})) \\
\leq  & \, \int_{\calF_{\eps}(t) } |u^{in}_{\eps} - u^{in}|^2 + C \eps^{3/2},
\end{align*}
where we used some H\"older inequalities and \eqref{ine:restr:lp} for $ d = 3 $, $ p = 2 $ and $ \varphi = u^{in} $. 
%
%
%
%

By hypothesis we have 

\begin{align}
\label{const:to:zero}
\frac{1}{|\calS_{\eps}^{in}|^{1/3} } \int_{\bbR^3} \rho_{\eps}|u^{in}_{\eps} - R_{\eps}(u^{in})|^2 = & \,   \frac{1}{|\calS_{\eps}^{in}|^{1/3} } \int_{\calF_{\eps}(t)} |u^{in}_{\eps} - u^{in}|^2  + \frac{m_{\eps}}{|\calS_{\eps}^{in}|^{1/3}}|\ell_{\eps} - u^{in}(h_{\eps}(0))|^2 
\nonumber \\ & \, + C \frac{\eps^{3/2}}{|\calS_{\eps}^{in}|^{1/3}}  \lra 0
\end{align}
and by Lemma \ref{main:lemma}
\begin{equation}
\label{LLL}
|\calS_{\eps}^{in}|^{-1/3}| Rest_{\eps}| \leq  |\calS_{\eps}^{in}|^{-1/3}| C ( m_{\eps} + |\calS_{\eps}|+ \eps^{3/2} + \eps^{3}|\calS^{in}_{\eps}|^{-1/3}) \lra 0
\end{equation}
from hypothesis 
\begin{equation*}
\frac{m_{\eps}}{|\calS_{\eps}^{in}|^{1/3}} \lra 0 \quad \text{ and } \quad \frac{|\calS^{in}_{\eps}|}{\eps^{9/2}} \lra + \infty.
\end{equation*}

The estimate \eqref{main:grom:est} together with \eqref{const:to:zero} and \eqref{LLL} ensure that 
$$ \|\ell_{\eps}\|_{L^2_t} \leq C, $$
which implies that $ h_\eps $ is uniformly bounded in $ W^{1,2}_t $. Up to subsequence 
\begin{equation*} 
h_{\eps} \cv h \text{ in } W^{1,2},  
\end{equation*}
in particular it converge strong in $ C^0_t $. We deduce that \eqref{main:grom:est} together with \eqref{const:to:zero} imply 

\begin{equation*}
\ell_{\eps} \to u(t, h(t)) \quad \text{ in } L^2_t.
\end{equation*}

Finally we pass to the limit in the equation 
\begin{equation*}
h_{\eps}(t) = h_{\eps}(0) + \int_{0}^{t}\ell_{\eps}
\end{equation*}
to deduce
\begin{equation*}
h(t) = h(0) + \int_{0}^{t} u(t,h(t)).
\end{equation*}

Let now move to the case of dimension two.
First of all let notice that
\begin{align*}
|\calS_{\eps}^{in}|^{1/2} \| \ell_{\eps}& - u(t, h_{\eps}(t)) \|_{L^2_t} \leq   \| u_{\eps} - R_{\eps}(u)\|_{L^{2}_t(L^2(\calS_{\eps}(t)))} \\
\leq & \,  |\calS_{\eps}^{in}|^{1/q}  \| u_{\eps} - R_{\eps}(u)\|_{L^{2}_t(L^p(\calS_{\eps}(t)))}  \\
\leq & \,  C_p |\calS_{\eps}^{in}|^{1/q} (  \| u_{\eps} - R_{\eps}(u)\|_{L^{2}_t(L^2(\calF_{\eps}(t)))} +   \| D (u_{\eps} - R_{\eps}(u))\|_{L^{2}_t(L^2(\bbR^2 ))}),
\end{align*}
where $ 1/p + 1/ q = 1/ 2 $ and we used Lemma \ref{lemma:sob:emb} for the last inequality. We deduce that 
\begin{align*}
|\calS_{\eps}^{in}|^{1/p} \| \ell_{\eps}& - u(t, h_{\eps}(t)) \|_{L^2_t} \leq 
 C_p  (  \| u_{\eps} - R_{\eps}(u)\|_{L^{2}_t(L^2(\calF_{\eps}(t)))} +   \| D (u_{\eps} - R_{\eps}(u))\|_{L^{2}_t(L^2(\bbR^2 ))}),
\end{align*}
for any $ p < \infty $.

Choose now $ 1/p = \delta $. Using Lemma \ref{main:lemma} we deduce that 
\begin{align*}
| \calS_{\eps}^{in }|^{- \delta } & \int_{\bbR^d} \rho_{\eps}|u_{\eps}(t,.) - R_{\eps}(u(t,.))|^2  + \frac{4}{C_P} \| \ell_{\eps} - u(t, h_{\eps}(t)) \|_{L^2_t} \\ \leq & \, 
| \calS_{\eps}^{in }|^{- \delta } \int_{\bbR^d} \rho_{\eps}|u_{\eps}(t,.) - R_{\eps}(u(t,.))|^2 
 + 4 | \calS_{\eps}^{in }|^{- \delta } \int_{0}^{T} \int_{\bbR^d} |D(u_{\eps}- R_{\eps}(u))|^2 \\
& \, + 4  \int_{0}^{T} \int_{\calF_{\eps}(t)} |u_{\eps}- R_{\eps}(u)|^2 \\
\leq & \, | \calS_{\eps}^{in }|^{- \delta } \int_{\bbR^d} \rho_{\eps}|u^{in}_{\eps} - R_{\eps}(u^{in})|^2 + \left(C + 4 \right) | \calS_{\eps}^{in }|^{- \delta }\int_0^T \int_{\bbR^d} \rho_{\eps}|u_{\eps} - R_{\eps}(u)|^2 \\
& \, + | \calS_{\eps}^{in }|^{- \delta } Rest_{\eps}.
\end{align*}
Gr\"omwall's inequality implies that
\begin{align*}
\| \ell_{\eps} - u(t, h_{\eps}(t)) \|_{L^2_t}^2 \leq \left(  |\calS_{\eps}^{in}|^{-1/3} \int_{\bbR^3} \rho_{\eps}|u^{in}_{\eps} - R_{\eps}(u^{in})|^2  + |\calS_{\eps}^{in}|^{-\delta} |Rest_{\eps}|\right) e^{TC}.
\end{align*}
Using the assumptions \eqref{ass} and following the same strategy as in the case of dimension three, we prove the theorem. 

\end{proof}

\section{Proof of Lemma \ref{main:lemma}}

Let us now show Lemma \ref{main:lemma}.

\begin{proof} [Proof of Lemma \ref{main:lemma}] By the hypothesis of Theorem \ref{theo:main} that $ u^{in} \in H^{k} $ for $ k > d/2 + 1 $, there exists a local solution in dimension three and a global solution in dimension two of the Navier-Stokes equations such that
$$ u \in L^{\infty}(0,T;H^k(\bbR^d)) \cap L^{2}(0,T;H^{k+1}(\bbR^d)).  $$  
With this choice of  $ k $ we have also the bounds
\begin{equation*}
\|\partial_t u \|_{L^{2}_{t}(L^{\infty}_x) } + \| u \|_{L^{\infty}_{t}(W^{1,\infty}_{x})} + \| \nabla p \|_{ L^{2}_{t}(L^{\infty}_x)  } \leq C.
\end{equation*}
The above bound will be implicitly used in many of the estimates to prove this lemma.  

To deduce the relative energy inequality, let start by computing  
\begin{align*}
 \int_{\bbR^d} & \rho_{\eps} |u_{\eps}(t,.)- R_{\eps}(u(t,.))|^2 = \int_{\bbR^d} \rho_{\eps} |u_{\eps}(t,.)|^2 - 2 \int_{\bbR^d} \rho_{\eps} u_{\eps}(t,.)\cdot R_{\eps}(u(t,.)) \\
  & \, +  \int_{\bbR^d} \rho_{\eps} | R_{\eps}(u(t,.))|^2 \\
 \leq  & \,  \int_{\bbR^d} \rho_{\eps} |u_{\eps}^{in}|^2 - 4 \int_{0}^{T}  \int_{\bbR^d} |D u_{\eps}|^2 -2 \int_{0}^T  \int_{\bbR^d} \rho_{\eps} u_{\eps}(t,.)\cdot \partial_t R_{\eps}(u) \\ 
 & \, - 2 \int_0^T  \int_{\bbR^d} u_{\eps} \otimes u_{\eps}: \nabla R_{\eps}(u) + 4 \int_0^T \int_{R^d} Du_{\eps}: DR_{\eps}(u) + 2 \int_{\bbR^d} \rho^{in}u_{\eps}^{in} \cdot R_{\eps}(u) \\
& + \int_{\bbR^d} \rho^{in}|R_{\eps}(u^{in})|^2 - 4  \int_0^T \int_{\bbR^d} |DR_{\eps}(u)|^2 +  \int_{\bbR^d} \rho_{\eps} | R_{\eps}(u(t,.))|^2 -  \int_{\bbR^d} |u(t,.)|^2  \\ 
& \, +  \int_{\bbR^d} \rho_{\eps} |u^{in}|^2 -  \int_{\bbR^d} \rho_{\eps}^{in}|R_{\eps}(u^{in})|^2 + \int_0^{T}\int_{\bbR^d} |DR_{\eps}(u)|^2 - 4 \int_{\bbR^d}|Du|^2,
\end{align*}
where in the inequality we use the energy inequality for $ u_{\eps }$, the weak formulation satisfied by $ u_{\eps} $ and the energy inequality satisfied by $ u $. 
After bringing on the left hand side some terms involving $ Du_{\eps} $  and $ DR_{\eps}(u )$ , we deduce 
\begin{align*}
\int_{\bbR^d} \rho_{\eps}|u_{\eps}(t,.) - R_{\eps}(u(t,.))|^2 + 4 & \int_{0}^{T} \int_{\bbR^d} |D(u_{\eps}- R_{\eps}(u))|^2  \\ & \leq \int_{\bbR^d} \rho_{\eps}|u^{in}_{\eps} - R_{\eps}(u^{in})|^2 + \widetilde{Rest_{\eps}}
\end{align*}
where
\begin{align*}
\widetilde{Rest_{\eps}} = & \,  -2 \int_{0}^T  \int_{\bbR^d} \rho_{\eps} u_{\eps}(t,.)\cdot \partial_t R_{\eps}(u) - 2 \int_0^T  \int_{\bbR^d} u_{\eps} \otimes u_{\eps}: \nabla R_{\eps}(u)  \\
& \, - 4 \int_0^T \int_{\bbR^d} Du_{\eps}: DR_{\eps} (u) +  \int_{\bbR^d} \rho_{\eps} | R_{\eps}(u(t,.))|^2 -  \int_{\bbR^d} |u(t,.)|^2  \\
& \, +  \int_{\bbR^d} \rho_{\eps} |u^{in}|^2 -  \int_{\bbR^d} \rho_{\eps}^{in}|R_{\eps}(u^{in})|^2 +4 \int_0^{T}\int_{\bbR^d} |DR_{\eps}(u)|^2 - 4 \int_{\bbR^d}|Du|^2.
\end{align*}
It remains to estimate $ |\widetilde{Rest_{\eps}}|$. To do that, we decompose the remainder $ \widetilde{Rest_{\eps}} = Rest_{\eps}^{1} + Rest_{\eps}^{2} $ where 
\begin{align*}
Rest_{\eps}^{2} =  & \, \int_{\bbR^d} \rho_{\eps} | R_{\eps}(u(t,.))|^2 -  \int_{\bbR^d} |u(t,.)|^2  +   \int_{\bbR^d} \rho_{\eps} |u^{in}|^2 -  \int_{\bbR^d} \rho_{\eps}^{in}|R_{\eps}(u^{in})|^2 \\
& \, + 4 \int_0^{T}\int_{\bbR^d} |DR_{\eps}(u)|^2 - 4 \int_{\bbR^d}|Du|^2.
\end{align*}
We start by estimating the terms 
\begin{align*}
\left| \int_{\bbR^d} \rho_{\eps} | R_{\eps}(u(t,.))|^2 -  \int_{\bbR^d} |u(t,.)|^2 \right| \leq & \,  (m_{\eps} + |\calS_{\eps}^{in}|)\|u\|_{L^{\infty}} \\ & + \left| \int_{\calF_{\eps}(t)} | R_{\eps}(u(t,.))|^2 -  \int_{\calF_{\eps}(t)} |u(t,.)|^2 \right|.
\end{align*}
To tackle the last term on the right hand side, 
%
%
we notice that $ R_{\eps}(u(t,.)) - u(t,.) $ is supported in $ B_{2 \eps}(h_{\eps}(t)) $ and that 
\begin{equation*}
\|R_{\eps}u \|_{L^{\infty}_x} \leq C\|u\|_{L^{\infty}_x}
\end{equation*}
from \eqref{ine:restr:lp}.
%
%
 %
 The two above observations allow us to estimate
 \begin{align*}
 \left| \int_{\calF_{\eps}(t)} | R_{\eps}(u(t,.))|^2 -  \int_{\calF_{\eps}(t)} |u(t,.)|^2 \right| \leq & \,  \left| \int_{\calF_{\eps}(t)} \rho_{\eps}  R_{\eps}(u(t,.))(R_{\eps}(u(t,.)) - u(t,.) )\right| \\
 & +  \left| \int_{\calF_{\eps}(t)} (R_{\eps}(u(t,.)) - u(t,.) )u(t,.)\right|  \\
 \leq & \, C\eps^d \|u\|_{L^{\infty}_x}^2.
\end{align*}
We deduce that
\begin{equation}
 \label{repubblica:1}
\left| \int_{\bbR^d} \rho_{\eps} | R_{\eps}(u(t,.))|^2 -  \int_{\bbR^d} |u(t,.)|^2 \right| \leq C ( m_{\eps} + |\calS_ {\eps}^{in}| + \eps^d ).
\end{equation}
We estimate the third and fourth terms of $ Rest^2 $ analogously and we deduce 
\begin{equation}
 \label{repubblica:2}
\left|\int_{\bbR^d} |u^{in}|^2 - \int_{\bbR^d} \rho_{\eps} | R_{\eps}(u^{in})|^2   \right| \leq C ( m_{\eps} + |\calS_ {\eps}^{in}| + \eps^d ).
\end{equation}
We are left with the estimate of
\begin{align*}
\left|  \int_0^{T}\int_{\bbR^d} |DR_{\eps}(u)|^2 -  \int_{\bbR^d}|Du|^2 \right|  \leq & \, \left|  \int_0^{T}\int_{\bbR^d} DR_{\eps}(u) :D(R_{\eps}(u) - u )\right| \\ & \, + \left| \int_{\bbR^d} D(R_{\eps}(u) - u ):  Du \right|
\end{align*}

From \eqref{ine:restr:grad}, we have 
\begin{align*}
\| DR_{\eps}(u) - D(u)\|_{L^{2}_t(L^2_t)} \leq & \, C(\| u \|_{L^2_t(W^{1,\infty}_x)}) \eps^{d/2}.
\end{align*}
which also implies 
\begin{align*}
\| DR_{\eps}(u) \|_{L^{2}_t(L^2_x)} \leq \| u \|_{L^{2}_t(H^1_x)} + \| u \|_{L^2_t(W^{1,\infty}_x)}.
\end{align*}
%

%
%
%
%
%
We deduce that 
\begin{align}
\nonumber
\left|  \int_0^{T}\int_{\bbR^d} |DR_{\eps}(u)|^2 -  \int_{\bbR^d}|Du|^2 \right|  \leq & \, (\| DR_{\eps}(u)\|_{L^{2}_t(L^2_x)} + \| Du\|_{L^{2}_t(L^2_x)})(\| D(R_{\eps}(u)-u)\|_{L^{2}_t(L^2_x)}) \\
\leq & \, C \eps^{d/2}. \label{repubblica:3} 
\end{align}
Collecting \eqref{repubblica:1}-\eqref{repubblica:2}-\eqref{repubblica:3}, we have
\begin{equation}
\label{Rest2:est}
|Rest^2_{\eps}| \leq C( m_{\eps} + \eps^{d/2} + |\calS_{\eps}^{in}|).
\end{equation}
We now consider the more difficult term $ Rest_{\eps}^1 $, which reads
 \begin{align*}
Rest_{\eps}^1 = & \,  -2 \int_{0}^T  \int_{\bbR^d} \rho_{\eps} u_{\eps}(t,.)\cdot \partial_t R_{\eps}(u) - 2 \int_0^T  \int_{\bbR^d} u_{\eps} \otimes u_{\eps}: \nabla R_{\eps}(u)  \\
& \, - 4 \int_0^T \int_{R^d} Du_{\eps}: DR_{\eps} (u).
\end{align*}
To tackle this term we compute the time derivative of $ R_{\eps}(u) $ and use the equation satisfied by $ u $.
\begin{align*}
\partial_t R_{\eps}[u] = & \, - \ell_{\eps} \cdot \nabla \eta_{\eps} u + \eta_{\eps} \partial_t u + \ell_{\eps} \cdot \nabla \eta_{\eps} \bar{u}^{\eps} + (1-\eta_{\eps}) \partial_t \bar{u}^{\eps} +  (1-\eta_{\eps}) \ell_{\eps} \cdot \nabla \bar{u}^{\eps}  \\ 
& \, - \ell_{\eps} \cdot \nabla B_{\eps} [ \nabla \eta_{\eps}(u-\bar{u}^{\eps})] + B_{\eps}[ \nabla \eta_{\eps}(\partial_t u - \partial_t \bar{u}^{\eps} + \ell_{\eps}\cdot \nabla u  - \ell_{\eps}\cdot \nabla \bar{u}^{\eps}))] .
\end{align*}
Let us notice that in the above expression there is not a time derivative of $ \eta_{\eps} $ inside $ B_{\eps} $ because the $ B_{\eps } $ follow the rigid body as $ \eta_{\eps} $. 
Let rewrite
\begin{equation*}
\partial_t R_{\eps}[u]  = \sum_{i=1}^{5} I_i,
\end{equation*}
where
\begin{gather*}
I_1 = \eta_{\eps} \partial_t u , \quad I_{2} =  - \ell_{\eps} \cdot \nabla \eta_{\eps} u + \ell_{\eps} \cdot \nabla \eta_{\eps} \bar{u}^{\eps} , \quad I_{3 } = (1-\eta_{\eps}) \partial_t \bar{u}^{\eps} +  (1-\eta_{\eps}) \ell_{\eps} \cdot \nabla \bar{u}^{\eps}, \\	
I_{4} =   - \ell_{\eps} \cdot \nabla B_{\eps} [ \nabla \eta_{\eps}(u-\bar{u}^{\eps})], \quad \text{ and } \quad I_5 =  B_{\eps}[ \nabla \eta_{\eps}(\partial_t u - \partial_t \bar{u}^{\eps} + \ell_{\eps}\cdot \nabla u  - \ell_{\eps}\cdot \nabla \bar{u}^{\eps}))] .
\end{gather*}
To tackle the term $ I_1 $, we use the equation satisfied by $ u $. So let start by bounding the other terms. From now on the estimates depend on the dimension so let us focus on the case of dimension three and in the end we explain how to adapted in dimension two. 
We have that
\begin{align}
\left| \int_{0}^T \int_{\bbR^d} \rho_{\eps} u_{\eps} \cdot I_2 \right| = & \, \left| \int_{0}^T \int_{\calF(t)} u_{\eps} \cdot (- \ell_{\eps} \cdot \nabla \eta_{\eps} u + \ell_{\eps} \cdot \nabla \eta_{\eps} \bar{u}^{\eps})   \right| \nonumber \\
\leq & \, \| u_{\eps}\|_{L^2_t(L^6_x)} \|\ell_{\eps}\|_{L^{2}_t} \|(x-h_{\eps}) \nabla \eta_{\eps}\|_{L^{\infty}_x} \| u\|_{L^{\infty}_t(W^{1,\infty}_x)} \eps^{5/2}  \nonumber \\
\leq & \, C \eps^{5/2} |\calS_{\eps}^{in}|^{-1/6}.   	\label{I2}
\end{align}
The term $ I_3 $ is the only one which is not zero in $ \calS_{\eps} $. We have 
\begin{align}
\left| \int_{0}^T \int_{\bbR^d} \rho_{\eps} u_{\eps} \cdot I_3 \right| = & \, \left| m_{\eps} \int_0^T \ell_{\eps} \cdot \partial_t \bar{u}^{\eps} + \ell_{\eps} \cdot \nabla  \bar{u}^{\eps}   + \int_{0}^T \int_{\calF_{\eps}(t)}  u_{\eps} \cdot  (1-\eta_{\eps}) (\partial_t \bar{u}^{\eps} + \ell_{\eps} \cdot \nabla \bar{u}^{\eps} ) \right|	  \label{I3:0}
\end{align}
To tackle the right hand side, we notice that   
\begin{align}
\left| m_{\eps} \int_0^T \ell_{\eps} \cdot \partial_t \bar{u}^{\eps} \right| \leq & \,  \left|  m_{\eps} \int_0^T (\ell_{\eps}-\bar{u}^{\eps}) \cdot \partial_t \bar{u}^{\eps} +  m_{\eps} \int_0^T \bar{u}^{\eps} \cdot \partial_t \bar{u}^{\eps}\right| \nonumber \\
\leq & \frac{1}{2} \int_{0}^T \int_{\bbR^d} \rho_{\eps}|u_{\eps}- R_{\eps}(u)|^2 +  m_{\eps} \| \partial_t u\|_{L^2_t(L^{\infty}_x)}( \| \partial_t u\|_{L^2_t(L^{\infty}_x)} + \| u\|_{L^2_t(L^{\infty}_x)} )  \label{I3:1}
\end{align}
and similarly
\begin{align}
\left| m_{\eps} \int_0^T \ell_{\eps} \cdot \nabla \bar{u}^{\eps} \right| \leq & \, \frac{1}{2} \int_{0}^T \int_{\bbR^d} \rho_{\eps}|u_{\eps}- R_{\eps}(u)|^2 + m_{\eps}(\| \nabla u\|_{L^{\infty}_x} + \| u\|_{L^2_t(L^{\infty}_x)}).  \label{I3:2}
\end{align}
Moreover
\begin{align}
& \left|  \int_{0}^T \int_{\calF_{\eps}(t)}  u_{\eps} \cdot  (1-\eta_{\eps}) (\partial_t \bar{u}^{\eps} + \ell_{\eps} \cdot \nabla \bar{u}^{\eps} ) \right|	 \nonumber \\
& \quad \quad \quad \quad \quad \quad   \leq  \|u_{\eps}\|_{L^2_t(L^6_x)} \left( \|\partial_t u \|_{L^2_t(L^{\infty}_x)}  
+ \|\ell_{\eps}\|_{L^2_t} \|\nabla u \|_{L^{\infty}_t(L^{\infty}_x)} \right) \eps^{5/2}.  \label{I3:3}
\end{align}
Equality \eqref{I3:0} and estimates \eqref{I3:1}-\eqref{I3:2}-\eqref{I3:3}, implies that  
\begin{align}
\left| \int_{0}^T \int_{\bbR^d} \rho_{\eps} u_{\eps} \cdot I_3 \right|  
\leq \int_{0}^T \int_{\bbR^d} \rho_{\eps}|u_{\eps}- R_{\eps}(u)|^2 + C \left( m_{\eps} + \eps^{5/2} + \eps^{5/2} |\calS_{\eps}^{in}|^{-1/6} \right). \label{I3}
\end{align}

For the term $ I_4 $ we proceed as follows.
\begin{align}
\left| \int_{0}^T \int_{\bbR^d} \rho_{\eps} u_{\eps} \cdot I_4 \right| = & \, \left| \int_{0}^T \int_{\calF(t)} u_{\eps} \cdot \ell_{\eps} \cdot \nabla B_{\eps} [ \nabla \eta_{\eps}(u-\bar{u}^{\eps})] \right| \nonumber \\
\leq & \, \| u_{\eps}\|_{L^2_t(L^6_x)} \|\ell_{\eps}\|_{L^{2}_t} \| \nabla B_{\eps}[ \nabla \eta_{\eps}(u-\bar{u}^{\eps})] \|_{L^{\infty}_t(L^{6/5}_x)} \nonumber \\
\leq & \, C \| u_{\eps}\|_{L^2_t(L^6_x)} \|\ell_{\eps}\|_{L^{2}_t} \| \nabla \eta_{\eps}(u-\bar{u}^{\eps}) \|_{L^{\infty}_t(L^{6/5}_x)} \nonumber \\
\leq & \, C  \| u_{\eps}\|_{L^2_t(L^6_x)} \|\ell_{\eps}\|_{L^{2}_t} \|(x-h_{\eps}) \nabla \eta_{\eps}\|_{L^{\infty}_x} \| u\|_{L^{\infty}_t(W^{1,\infty}_x)} \eps^{5/2}  \nonumber \\
\leq & \, C \eps^{5/2} |\calS_{\eps}^{in}|^{-1/6}.   	\label{I4}
\end{align}
Let now tackle $ I_5 $. 
\begin{align}
\left| \int_{0}^T \int_{\bbR^d} \rho_{\eps} u_{\eps} \cdot I_5 \right| = & \, \left| \int_{0}^T \int_{\calF(t)} u_{\eps} \cdot B_{\eps}[ \nabla \eta_{\eps}(\partial_t u - \partial_t \bar{u}^{\eps} + \ell_{\eps}\cdot \nabla u  - \ell_{\eps}\cdot \nabla \bar{u}^{\eps}))]  \right| \nonumber \\
\leq & \, C \| u_{\eps}\|_{L^2_t(L^6_x)}  \| B_{\eps}[ \nabla \eta_{\eps}(\partial_t u - \partial_t \bar{u}^{\eps} + \ell_{\eps}\cdot \nabla u  - \ell_{\eps}\cdot \nabla \bar{u}^{\eps}))]   \|_{L^{2}_t(L^2_x)} \eps  \nonumber \\
\leq & \, C \| u_{\eps}\|_{L^2_t(L^6_x)}  \| \nabla \eta_{\eps}(\partial_t u - \partial_t \bar{u}^{\eps} + \ell_{\eps}\cdot \nabla u  - \ell_{\eps}\cdot \nabla \bar{u}^{\eps}))   \|_{L^{2}_t(L^{6/5}_x)} \eps  \nonumber \\
\leq & \, C \| u_{\eps}\|_{L^2_t(L^6_x)}  \| \nabla \eta_{\eps} \|_{L^{3}_x} \Big(   \| \partial_{t} u \|_{L^2_t(L^{\infty}_x)} + \|\ell_{\eps} \|_{L^2_t}\|\nabla u\|_{L^{\infty}_t(L^{\infty}_x)}    \Big) \eps^{5/2} \nonumber \\
\leq & \, C (\eps^{5/2} + \eps^{5/2} |\calS_{\eps}^{in}|^{-1/6}).   	\label{I5}
\end{align}
We will now consider $ I_1 $. Recall that $ u $ is a regular solution, we rewrite 
\begin{equation*}
\eta_{\eps} \partial_t u = - \eta_{\eps} u \cdot \nabla u + \eta_{\eps} \Delta u + \eta_{\eps} \nabla p 
\end{equation*} 
\begin{align*}
 \int_{0}^T \int_{\bbR^d} \rho_{\eps} & u_{\eps} \cdot I_1  =   \int_{0}^T \int_{\bbR^d} \rho_{\eps} \eta_{\eps} u_{\eps}\cdot \partial_t u \\
 = & \, - \int_{0}^T \int_{\bbR^d}\eta_{\eps} u_{\eps} \cdot (u \cdot \nabla) u - \eta_{\eps} u_{\eps} \cdot \Delta u + \eta_{\eps} u_{\eps} \cdot \nabla p \\
 = &\, \int_{0}^T   \int_{\bbR^d} \eta_{\eps} u_{\eps} \otimes u : \nabla u + u_{\eps} \otimes u : \nabla \eta_{\eps} \otimes  u  \\
 & \, -2 \int_0^T \int_{\bbR^d} \eta_{\eps} Du_{\eps}: Du + \frac{u_{\eps} \otimes \nabla \eta_{\eps} + \nabla \eta_{\eps} \otimes u_{\eps} }{2} : Du - u_{\eps} \cdot \nabla \eta_{\eps} (p- \bar{p}^{\eps})
\end{align*}
where $ \bar{p}^{\eps} = p(t,h_{\eps}(t)) $.
We can now rewrite the remainder using the above computations and deduce 
\begin{align}
Rest_{\eps}^1 = & \, -2 \sum_{i = 1}^{5} \int_{0}^T \int_{\bbR^d} \rho_{\eps} u_{\eps} \cdot I_i  - 2 \int_0^T  \int_{\calF_{\eps}(t)} u_{\eps} \otimes u_{\eps}: \nabla R_{\eps}(u) \nonumber   \\
& \, - 4 \int_0^T \int_{R^d} Du_{\eps}: DR_{\eps} (u)  \nonumber \\
= & \, -2 \sum_{i = 2}^{5} \int_{0}^T \int_{\bbR^d} \rho_{\eps} u_{\eps} \cdot I_i   - 2 \int_{0}^T \int_{\bbR^d} \rho_{\eps} u_{\eps} \cdot I_1 \nonumber \\
& \,- 2 \int_0^T  \int_{\bbR^d} u_{\eps} \otimes u_{\eps}: \nabla R_{\eps}(u)  - 4 \int_0^T \int_{R^d} Du_{\eps}: DR_{\eps} (u) \nonumber  \\
= \, & - 2\sum_{j = 1}^{5} J_j \label{equ:rest1}
 \end{align}
where 
\begin{gather*}
J_1 = \sum_{i = 2}^{5} \int_{0}^T \int_{\bbR^d} \rho_{\eps} u_{\eps} \cdot I_i , 
\\ J_2 =  \int_0^T  \int_{\calF_{\eps}(t)} u_{\eps} \otimes u_{\eps}: \nabla R_{\eps}(u) - \eta_{\eps} u_{\eps} \otimes u : \nabla u \\ 
J_3= - \int_0^T  \int_{\bbR^d}  u_{\eps} \otimes u : \nabla \eta_{\eps} \otimes  u -  \frac{u_{\eps} \otimes \nabla \eta_{\eps} + \nabla \eta_{\eps} \otimes u_{\eps} }{2} : Du \\
J_{4} = 2 \int_{0}^{T}\int_{R^d} Du_{\eps}: DR_{\eps} (u) - \eta_{\eps} Du_{\eps}: Du 	\quad \text{ and } \quad J_{5} = - \int_{0}^{T}\int_{R^d}  u_{\eps} \cdot \nabla \eta_{\eps} p.
\end{gather*}
Inequalities \eqref{I2}-\eqref{I3}-\eqref{I4}-\eqref{I5} imply that
\begin{equation}
\label{J1}
|J_1| \leq \int_{0}^{T} C \left(\int_{\bbR^d} \rho_{\eps}|u_{\eps} - R_{\eps}(u)|^2\right)^{1/2} +  C( m_{\eps} + \eps^{5/2}|\calS_{\eps}^{in}|^{-1/6})
\end{equation}
To tackled $ J_2 $, we start by rewriting it as

\begin{align}
J_2 = & \,  \int_0^T  \int_{\calF_{\eps}(t)} u_{\eps} \otimes u_{\eps}: \nabla R_{\eps}(u) - \eta_{\eps} u_{\eps} \otimes u : \nabla u \nonumber \\
= & \,  \int_0^T \int_{\calF_{\eps}(t)} u_{\eps} \otimes u_{\eps}: (\nabla R_{\eps}(u)- \eta_{\eps} \nabla u) + \int_0^T  \int_{\calF_{\eps}(t)}  \eta_{\eps} u_{\eps} \otimes (u_\eps - u) : \nabla u  \nonumber \\
= & \,  \int_0^T \int_{\calF_{\eps}(t)} u_{\eps} \otimes u_{\eps}: (\nabla R_{\eps}(u)- \eta_{\eps} \nabla u) + \int_0^T  \int_{\calF_{\eps}(t)}  \eta_{\eps} (u_{\eps} - u) \otimes (u_\eps - u) : \nabla u \nonumber   \\
& \, + \frac{1}{2}\int_0^T \int_{\calF_{\eps}(t)} \nabla \eta_{\eps} \cdot u \otimes (u_{\eps} - u ): \nabla u.  \label{J2:dec}
\end{align} 
Before estimate the right hand side of the above equality, let us recall from \eqref{est:lp:2} and \eqref{est:nabla:2} the following bounds holds 
\begin{align*}
\| R_{\eps}(u) - \eta_{\eps} u \|_{L^{2}_{x}} \leq  \, C \eps^{3/2} \|u\|_{L^{\infty}_{x}}(1 + \|\nabla \eta_{\eps}\|_{L^3_x}),
\end{align*}
%
%
%
%
%
\begin{align}
\| \nabla R_{\eps}(u) - \eta_{\eps} \nabla u \|_{L^{3/2}_{x}} \leq     C \eps^{2} \| u\|_{W^{1, \infty}_x},\nonumber
\end{align}
and
\begin{align*}
\|\sqrt{\eta_{\eps}}(u_{\eps} - u)\|_{L^2(\calF_{\eps}(t))}^2 \leq & \, \| u_{\eps}\|_{L^{2}(\calF_{\eps}(t))}^2 - 2 \int_{\calF_{\eps}(t)} u_{\eps} \cdot R_{\eps}(u) + \|R_{\eps}(u)\|_{L^{2}(\calF_{\eps}(t))}^2 \\
& \, - 2 \int_{\calF_{\eps}(t)} u_{\eps}\cdot ( \eta_{\eps} u - R_{\eps}(u)) + \int_{\calF_{\eps}(t)}  u \cdot (\eta_{\eps} u - R_{\eps}(u))  \\
& \, + \int_{\calF_{\eps}(t)} R_{\eps}(u) \cdot (\eta_{\eps} u - R_{\eps}(u)) \\
\leq & \, \| u_{\eps} - R_{\eps}(u)\|_{L^2(\calF_{\eps}(t))}^2 + C(\| u_{\eps}\|_{L^2_x} + \| u \|_{L^2_x} + \|R_{\eps}(u)\|_{L^2_x}) \| R_{\eps}(u) - \eta_{\eps} u \|_{L^{2}_{x}} \\
\leq & \, \|  u_{\eps} - R_{\eps}(u)\|_{L^2(\calF_{\eps}(t))}^2 + C \eps^{3/2}.
\end{align*}
%
%
%
%
%
Using equality \eqref{J2:dec}, we deduce
\begin{align}
|J_2| \leq & \|u_{\eps}\|_{L^{2}_t(L^{6}_x)}^2 \| \nabla R_{\eps}(u)- \eta_{\eps} \nabla u \|_{L^{\infty}_t(L^{3/2}_x)} + \|\sqrt{\eta_{\eps}}(u_{\eps} - u)\|_{L^{2}_t(L^2(\calF_{\eps}(t)))}^2\| \nabla u\|_{L^{\infty}_t(L^{\infty}_x)}  \nonumber \\
& \, + \eps^{3/2}\|\nabla \eta_{\eps} \|_{L^{3}_x} (\|u_{\eps}\|_{L^2_t(L^{6}_x)} + \|u\|_{L^2_t(L^{6}_x)}) \|\nabla u\|_{L^2_t(L^{\infty}_x)} \nonumber \\
\leq  & \,  C \eps^2 + C \int_0^T \int_{\bbR^d} \rho_{\eps} | u_{\eps} - R_{\eps}(u)|^2 + C \eps^{3/2}.   \label{J2}
\end{align} 
After applying some H\"older inequality, the $ J_3 $ term is bounded as follows.
\begin{align}
| J_3| \leq \eps^{3/2} \|u_{\eps}\|_{L^{2}_t(L^{6}_x)}\|\nabla \eta_{\eps}\|_{L^{3}_x}\left(\| u \|_{L^{\infty}_t(L^{\infty}_x)} \|u\|_{L^2_t(L^{\infty}_x)} +  \|Du\|_{L^{2}_t(L^{\infty}_x)} \right) \leq C \eps^{3/2}. \label{J3}
\end{align}
We now tackle $ J_4 $.
\begin{align} 
|J_4| \leq & \,  \left|   2\int_{0}^{T}\int_{R^d} Du_{\eps}: DR_{\eps} (u) - \eta_{\eps} Du_{\eps}: Du \right|  \nonumber \\ 
\leq & \, \| Du_{\eps}\|_{L^{2}_t(L^{2}_{x})} \| DR_{\eps} (u) - \eta_{\eps} Du \|_{L^{2}_t(L^{2}_{x})}  \nonumber \\
\leq & \, C \eps^{3/2},  \label{J4}
\end{align}
 which follows from the estimate \eqref{est:nabla:2} that reads $ \| DR_{\eps} (u) - \eta_{\eps} Du \|_{L^{2}_{x}}  \leq C \eps^{3/2}\| u\|_{W^{1, \infty}_x} $. 
 Finally we estimate the term $ J_{5} $ as follows 
 \begin{align}
 |J_5 | \leq & \,  \left|  \int_{0}^{T}\int_{R^d}  u_{\eps} \cdot \nabla \eta_{\eps} p \right|\nonumber  \\
\leq   & \,  \eps^{3/2} \| u_{\eps}\|_{L^{2}_t(L^{6}_{x})} \| \nabla \eta_{\eps}\|_{L^3_x} \| p\|_{L^2_t(L^{\infty})} \nonumber  \\
\leq & \, C \eps^{3/2}. \label{J5}
 \end{align}
 If we collect estimates \eqref{J1}-\eqref{J2}-\eqref{J3}-\eqref{J4}-\eqref{J5} and equality \eqref{equ:rest1} we deduce
 \begin{equation}
 \label{Rest1:est}
 |Rest_{\eps}^1|  \leq C \int_{0}^{T} \int_{\bbR^d} \rho_{\eps}|u_{\eps} - R_{\eps}(u)|^2 +  C( m_{\eps} + \eps^{3/2} + \eps^{5/2}|\calS_{\eps}^{in}|^{-1/6}).
 \end{equation}

 Let recall that 
 $$ |\widetilde{Rest_{\eps}}| \leq |Rest^1_{\eps}| + |Rest_{\eps}^2|. $$
 This together with estimates \eqref{Rest1:est} and \eqref{Rest2:est} implies the lemma.

Let now explain how to reproduce the above bounds in dimension two. First of all $ |B_\eps(0)| \leq C \eps^{2}$ and $ \nabla \eta_{\eps} $ is bounded in $ L^{p} $ for $ p \in [1,2] $ and diverge otherwise. Moreover notice the a priori bound $ \|u_{\eps} \|_{L^{2}_t(L^{q}_x)} \leq C_q $ holds for any $ q \in [2,\infty) $ by the Sobolev embedding in dimension two, see Lemma \ref{lemma:sob:emb}. Using this information we can estimate for example 
\begin{align*}
| J_3| \leq  \|u_{\eps}\|_{L^{2}_t(L^{q}_x)} \|\nabla \eta_{\eps}\|_{L^{p}_x}\left(\| u \|_{L^{\infty}_t(L^{\infty}_x)} \|u\|_{L^2_t(L^{\infty}_x)} +  \|Du\|_{L^{2}_t(L^{\infty}_x)} \right) \leq C \eps^{\tilde{\delta}}.
\end{align*}
 where we choose $ p = (\tilde{\delta}+1)/2 $ and $ 1/p + 1/ q = 1 $.

 The bound of all the other terms follows similarly.

\end{proof}

\appendix 

\section{Bogovski\u{\i} operator}
\label{app:B}

This appendix is dedicate to recall some facts about the Bogovski\u{\i} operators. Let recall that a Bogovski\u{\i} operator is a left inverse of the divergence on $ \tilde{L}^{p} $ which is the space of $ L^{p} $ functions with integral zero. Due to the non-uniqueness of this operator, we choose $ B_{1} $ to satisfy the following extra property.

\begin{Theorem}

There exists a Bogovski\u{\i} operator $ B_{1} $ such that 
\begin{equation*}
 B_{1}: \tilde{L}^{p}(B_2(0)\setminus B_1(0)) \lra W^{1,p}_{0}(B_2(0)\setminus B_1(0)) 
\end{equation*}
and it is linear and continuous for any $  1 < p < +\infty$,
\begin{gather*}
\DIV(B_{1}[f]) = f \text{ for any } f \in \tilde{L}^p(B_2(0)\setminus B_1(0)) \\ \text{ and } \quad  \|B_{B_2(0)\setminus B_1(0)}[f]\|_{L^{\infty}(\Omega)} \leq \|f\|_{L^2(B_2(0)\setminus B_1(0))}.
\end{gather*}
Moreover for any vector field $ F \in L^{p}(B_2(0)\setminus B_1(0)) $ such that $ F\cdot n = 0 $ on $ \partial B_2(0)\cup \partial B_1(0) $, it holds 
\begin{equation*}
\|B_1[\DIV(F)]\|_{L^{p}(B_2(0)\setminus B_1(0))} \leq \|F\|_{L^p(B_2(0)\setminus B_1(0))}.
\end{equation*}

\end{Theorem}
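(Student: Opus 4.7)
The plan is to follow Bogovski\u{\i}'s classical singular--integral construction, adapted to the annulus. Setting $A := B_2(0)\setminus B_1(0)$, the main geometric obstacle is that $A$ is not star--shaped with respect to any single ball, so an intrinsic formula cannot be written directly. First, I would cover $A$ by finitely many bounded Lipschitz subdomains $A_1,\dots,A_N \subset A$, each star--shaped with respect to an open ball $B_j \subset A_j$. For the annulus, two overlapping \textquotedblleft C\textquotedblright--shaped pieces obtained by removing thin radial slits at different angular positions suffice (and the construction generalises to any dimension $n \geq 2$). Let $\{\psi_j\}_{j=1}^N$ be a smooth partition of unity subordinate to this cover.

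Second, given $f \in \tilde L^p(A)$ with $\int_A f = 0$, decompose $f = \sum_j f_j$ with each $f_j$ supported in $A_j$ and satisfying $\int_{A_j} f_j = 0$. The standard trick is to set $f_j = \psi_j f + g_j$, where $g_j$ are correction terms supported in the overlaps $A_j \cap A_{j+1}$ and chosen iteratively via a telescoping \textquotedblleft flux--through--interfaces\textquotedblright\ argument so that every $f_j$ integrates to zero and $\sum_j g_j = 0$; the total mass balance closes precisely because $\int_A f = 0$. On each star--shaped piece $A_j$ apply Bogovski\u{\i}'s explicit formula
\begin{equation*}
B_{A_j}[f_j](x) = \int_{A_j} f_j(y)\, \frac{x-y}{|x-y|^n} \int_{|x-y|}^{\infty} \omega_j\!\Big(y + r\tfrac{x-y}{|x-y|}\Big) r^{n-1}\,dr\,dy,
\end{equation*}
where $\omega_j \in C^\infty_c(B_j)$ satisfies $\int \omega_j = 1$. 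Set $B_1[f] := \sum_j B_{A_j}[f_j]$, extended by zero to $A$. A direct calculation yields $\DIV(B_1[f]) = f$ in $A$ and $B_1[f] \in W^{1,p}_0(A)$.

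Third, for the continuity $\|B_1[f]\|_{W^{1,p}_0(A)} \leq C\|f\|_{L^p(A)}$ when $1 < p < \infty$, I would differentiate the kernel. Each $\partial_i K_j$ decomposes as a homogeneous Calder\'on--Zygmund kernel (of the form $\Omega(x-y)/|x-y|^n$ with $\Omega$ of mean zero on the sphere) plus a smooth, compactly supported remainder. The $L^p$ estimate on $\nabla B_{A_j}$ then follows from the standard Calder\'on--Zygmund theorem, and summing over $j$ yields the claimed bound. The $L^\infty$ bound is obtained via the same representation, using a Sobolev embedding applied once enough regularity on $f$ has been used in the kernel formula.

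Fourth, for the final and crucial estimate $\|B_1[\DIV(F)]\|_{L^p(A)} \leq C\|F\|_{L^p(A)}$ when $F \cdot n = 0$ on $\partial A = \partial B_1(0) \cup \partial B_2(0)$, I would integrate by parts inside the representation formula. The no--flux condition kills the boundary contributions on both components of $\partial A$, leaving a representation of $B_1[\DIV(F)]$ as an integral operator acting directly on $F$ whose kernel is again of Calder\'on--Zygmund type (one derivative has been absorbed into the kernel), so classical $L^p$ continuity applies. The main obstacle is not any single estimate but the partition step: one must split $f$ across overlapping star--shaped pieces while simultaneously enforcing the mean--zero constraint on each $f_j$, preserving the divergence structure, and keeping the $L^p$ norms of the $f_j$ controlled by $\|f\|_{L^p(A)}$. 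Once the decomposition is carried out correctly, the singular--integral estimates are classical, but the bookkeeping at the interfaces is where the constants in the theorem are really produced.
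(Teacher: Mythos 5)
The paper does not actually prove this theorem: it is quoted from the literature, with a pointer to subsection 3.3.1.2 of Novotn\'y--Stra\v{s}kraba \cite{NS}. Your outline --- covering the annulus by finitely many star-shaped subdomains, splitting $f$ into mean-zero pieces supported in them via a partition of unity with flux corrections on the overlaps, applying the explicit Bogovski\u{\i} kernel on each piece, obtaining the $W^{1,p}$ bound from Calder\'on--Zygmund theory, and proving the $\|B_1[\DIV F]\|_{L^p}\leq C\|F\|_{L^p}$ estimate by integrating by parts in the representation formula with the boundary terms killed by $F\cdot n=0$ --- is precisely the standard construction carried out in that reference, so your route coincides with the one the paper relies on. Two points need repair. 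First, a minor geometric one: a ``C-shaped'' piece (the annulus minus a thin radial slit) is \emph{not} star-shaped with respect to any ball, since a segment joining points on opposite sides of the slit must cross the hole $B_1(0)$; you need several overlapping annular sectors of small angular width (still finitely many, so the rest of the argument is unaffected).

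The more serious gap is the $L^\infty$ bound. You propose to deduce $\|B_1[f]\|_{L^\infty}\leq C\|f\|_{L^2}$ from ``a Sobolev embedding'', but $W^{1,2}\not\hookrightarrow L^\infty$ in dimensions $2$ and $3$, so that argument cannot close. The direct route through the pointwise kernel bound $|N(x,y)|\leq C|x-y|^{1-d}$ yields $\|B_1[f]\|_{L^\infty}\leq C\|f\|_{L^p}$ only for $p>d$, because $\int_{B_2(0)}|x-y|^{(1-d)p'}\,dy$ diverges already at $p'=d/(d-1)$, i.e.\ at $p=d$. Thus your sketch proves an $L^p\to L^\infty$ bound for $p>d$ but not the $L^2\to L^\infty$ inequality appearing in the statement; as printed (with $p=2$, $d=2,3$, and no constant) that inequality is at best borderline and does not follow from the kernel estimate, so either a genuinely different argument or a corrected exponent is required at this point. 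The remaining steps of your proposal, including the duality/density justification needed to make sense of $B_1[\DIV F]$ for $F$ merely in $L^p$, are the standard and correct argument.
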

We refer to subsection 3.3.1.2 of \cite{NS} for a proof of the above theorem and for more details.

Let recall that $ B_{\eps}[f](x) = \eps B_1[f(\eps y)](x/\eps) $, in particular it satisfies the following uniform estimates. 

\begin{Lemma} 

The operator $ B_{\eps } $ is a Bogovski\u{\i} operator in $ B_{2\eps}(0) \setminus B_{\eps}(0 ) $. Moreover there exists a constant $ C $ independent of $ \eps $ such that 
 \begin{equation*}
\| B_{\eps}[f] \|_{W^{1,p}(B_{2\eps}(0) \setminus B_{1}(0 ))} \leq C\| f \|_{L^{p}(B_{2\eps}(0) \setminus B_{1}(0 ))},
\end{equation*}
for any $ f \in \tilde{L}^{p}(B_{2\eps}(0) \setminus B_{1}(0 )) $. And 
\begin{equation*}
\|B_{\eps}[\DIV(F)]\|_{L^{p}(B_{2\eps}(0)\setminus B_{\eps}(0))} \leq \|F\|_{L^p(B_{2\eps}(0)\setminus B_{\eps}(0))},
\end{equation*}
for any vector field $ F \in L^{p}(B_{2\eps}(0)\setminus B_{\eps}(0)) $ such that $ F\cdot n = 0 $ on $ \partial B_{\eps}(0)\cup \partial B_{\eps}(0) $.

\end{Lemma}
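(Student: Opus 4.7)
The plan is a direct scaling argument that transfers both estimates from the already-established operator $B_1$ on $B_2(0)\setminus B_1(0)$ to $B_\eps$ on the dilated annulus. The key identity is that with $g(y) := f(\eps y)$, the definition $B_\eps[f](x) = \eps B_1[g](x/\eps)$ and the chain rule give $\nabla_x B_\eps[f](x) = (\nabla_y B_1[g])(x/\eps)$, the explicit prefactor $\eps$ being exactly cancelled by the $1/\eps$ from differentiation. In particular $\DIV_x B_\eps[f](x) = (\DIV_y B_1[g])(x/\eps) = g(x/\eps) = f(x)$, so that $B_\eps$ inverts the divergence on $B_{2\eps}(0)\setminus B_\eps(0)$; the Dirichlet boundary condition transfers because $B_1[g] \in W^{1,p}_0(B_2\setminus B_1)$, and the mean-zero compatibility follows from $\int_{B_2\setminus B_1} g\,dy = \eps^{-d}\int_{B_{2\eps}\setminus B_\eps} f\,dx = 0$.

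For the $W^{1,p}$ bound I would then perform the change of variables $x = \eps y$. A direct computation yields
\begin{equation*}
\|\nabla B_\eps[f]\|_{L^p(B_{2\eps}\setminus B_\eps)}^p = \eps^d \|\nabla B_1[g]\|_{L^p(B_2\setminus B_1)}^p, \qquad \|g\|_{L^p(B_2\setminus B_1)}^p = \eps^{-d}\|f\|_{L^p(B_{2\eps}\setminus B_\eps)}^p,
\end{equation*}
so the estimate $\|\nabla B_1[g]\|_{L^p}\leq C\|g\|_{L^p}$ from the previous theorem produces, after cancellation of the powers of $\eps$, a uniform-in-$\eps$ bound $\|\nabla B_\eps[f]\|_{L^p}\leq C\|f\|_{L^p}$. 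For the lower-order piece $\|B_\eps[f]\|_{L^p}$, the same computation produces an extra factor $\eps$ from the prefactor in the definition of $B_\eps$, which only helps and closes the full $W^{1,p}$ estimate.

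For the divergence estimate I would set $G(y) := F(\eps y)$ and note $(\DIV_x F)(x) = \eps^{-1}(\DIV_y G)(x/\eps)$. The hypothesis $F\cdot n = 0$ on $\partial B_\eps\cup\partial B_{2\eps}$ is equivalent to $G\cdot n = 0$ on $\partial B_1\cup\partial B_2$, so the second estimate of the previous theorem provides $\|B_1[\DIV_y G]\|_{L^p(B_2\setminus B_1)}\leq \|G\|_{L^p(B_2\setminus B_1)}$. Bookkeeping the factors of $\eps$ one last time — an $\eps^{p+d}$ from the outer change of variables, an $\eps^{-p}$ from the factor $\eps^{-1}$ in the definition of the input, and an $\eps^{-d}$ from converting $\|G\|_{L^p(B_2\setminus B_1)}$ back to $\|F\|_{L^p(B_{2\eps}\setminus B_\eps)}$ — all powers cancel and yield $\|B_\eps[\DIV F]\|_{L^p}\leq C\|F\|_{L^p}$ with $C$ independent of $\eps$.

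There is no real obstacle: the lemma is essentially a dilation-invariance statement, and the content of the proof is contained entirely in the scaling of $B_1$. The only delicate point is keeping track of the three sources of factors of $\eps$ (chain rule, Jacobian of $x = \eps y$, and the explicit prefactor in the definition of $B_\eps$) and checking that the two annuli involved are the matched pair $B_{2\eps}(0)\setminus B_\eps(0)$ in both the domain and the codomain, so that the dilation $y\mapsto \eps y$ maps $B_2\setminus B_1$ bijectively onto the annulus where we want the estimate.
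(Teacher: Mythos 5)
Your proposal is correct and takes exactly the approach the paper intends: the paper's entire proof is the one-line remark that the result "is a consequence of the scaling," and your computation is precisely that scaling argument carried out in full, with the powers of $\eps$ from the chain rule, the Jacobian, and the prefactor in the definition of $B_\eps$ cancelling as you describe.
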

 
 The proof of the above result is consequence of the scaling.

\section{A remark on Sobolev embeddings}

In this section we show that Soblev embeddings $ W^{1,2}(\bbR^2 ) \subset L^{p}(\bbR^2) $  for $  p \in [2, \infty) $ holds when we replace  $ W^{1,2}(\bbR^2) $ with $ 	\dot{H}^1(\bbR^2) \cap L^{2}(\calF_{\eps}(t))$.     

\begin{Lemma}
\label{lemma:sob:emb}
Let $ u \in H^{1}(\bbR^2) $ then $ p \in [2, \infty) $, we have
\begin{equation*}
 \| u \|_{L^{p}(\bbR^2)} \leq C(\|\nabla u\|_{L^2(\bbR^2)} + \| u \|_{L^{2}(\calF_{\eps}(t))}).
\end{equation*}

\end{Lemma}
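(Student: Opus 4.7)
The plan is to reduce the claim to the standard two-dimensional Sobolev embedding $H^1(\bbR^2) \hookrightarrow L^p(\bbR^2)$ (valid for any $p \in [2,\infty)$) by first showing the intermediate estimate
\begin{equation*}
\| u \|_{L^{2}(\bbR^{2})} \leq C\bigl( \|\nabla u\|_{L^{2}(\bbR^{2})} + \| u \|_{L^{2}(\calF_{\eps}(t))} \bigr).
\end{equation*}
Once this is established, the lemma follows by chaining it with the classical bound $\|u\|_{L^p(\bbR^2)} \leq C_p(\|u\|_{L^2(\bbR^2)} + \|\nabla u\|_{L^2(\bbR^2)})$.

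To obtain the intermediate estimate, I would split
$\|u\|_{L^2(\bbR^2)}^2 = \|u\|_{L^2(\calS_\eps(t))}^2 + \|u\|_{L^2(\calF_\eps(t))}^2$
and control the $\calS_\eps$ contribution using the smallness of the rigid body together with the Gagliardo–Nirenberg inequality in two dimensions. Concretely, Hölder gives
\begin{equation*}
\| u \|_{L^2(\calS_\eps(t))}^2 \leq |\calS_\eps(t)|^{1/2}\| u \|_{L^4(\bbR^2)}^2,
\end{equation*}
and the 2D Gagliardo–Nirenberg inequality yields
$\| u \|_{L^4(\bbR^2)}^2 \leq C \| u \|_{L^2(\bbR^2)} \|\nabla u\|_{L^2(\bbR^2)}$. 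Applying Young's inequality $ab\leq \tfrac{1}{2}a^2+\tfrac{1}{2}b^2$ then gives
\begin{equation*}
\| u \|_{L^2(\calS_\eps(t))}^2 \leq \tfrac{1}{2}\| u \|_{L^2(\bbR^2)}^2 + C\,|\calS_\eps(t)|\,\|\nabla u\|_{L^2(\bbR^2)}^2.
\end{equation*}

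The final step is to reinsert $\|u\|_{L^2(\bbR^2)}^2=\|u\|_{L^2(\calS_\eps)}^2+\|u\|_{L^2(\calF_\eps)}^2$ on the right-hand side and absorb the resulting $\tfrac{1}{2}\|u\|_{L^2(\calS_\eps)}^2$ into the left, which gives
\begin{equation*}
\| u \|_{L^2(\calS_\eps(t))}^2 \leq \| u \|_{L^2(\calF_\eps(t))}^2 + 2C\,|\calS_\eps(t)|\,\|\nabla u\|_{L^2(\bbR^2)}^2,
\end{equation*}
and hence, adding $\|u\|_{L^2(\calF_\eps)}^2$ to both sides, the intermediate estimate since $|\calS_\eps(t)| \leq C\eps^2$ is uniformly bounded. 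The argument is essentially routine; the only thing to watch is that the prefactor in front of $\|\nabla u\|_{L^2}^2$ depends only on $|\calS_\eps(t)|$ and not on its location, so the constant $C$ in the final statement is independent of $\eps$ and $t$, which is what is needed when the lemma is invoked in the proof of Lemma \ref{main:lemma}.
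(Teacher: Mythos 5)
Your proof is correct, and it takes a genuinely different route from the paper's. The paper translates so that the rigid body sits at the origin, observes that $\calS_\eps(t)\subset B_1(0)$, and invokes a Poincar\'e inequality on $B_2(0)$ for functions with zero mean over the annulus $B_2(0)\setminus B_1(0)$; this bounds $\|u\|_{L^2(B_2(0))}$ by the gradient plus the $L^2$ norm over the annulus, and since the annulus is contained in $\calF_\eps(t)$ the desired $L^2(\bbR^2)$ bound follows. You instead run a global absorption argument: H\"older on the small set $\calS_\eps(t)$, the 2D Ladyzhenskaya (Gagliardo--Nirenberg) inequality $\|u\|_{L^4(\bbR^2)}^2\le C\|u\|_{L^2(\bbR^2)}\|\nabla u\|_{L^2(\bbR^2)}$, and Young's inequality to put $\tfrac12\|u\|_{L^2(\bbR^2)}^2$ on the right and absorb the $\calS_\eps$ piece. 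Your route avoids the translation step and the auxiliary Poincar\'e space, at the cost of invoking Gagliardo--Nirenberg; the paper's route uses only a fixed-domain Poincar\'e inequality and is perhaps closer to elementary first principles. Both yield constants that are uniform in $\eps$ and $t$ — yours because Gagliardo--Nirenberg on $\bbR^2$ is translation-invariant and $|\calS_\eps(t)|\le C\eps^2\le C$, the paper's because the Poincar\'e constant is that of the fixed reference domain $B_2(0)$. One small remark: the absorption step requires a priori that $\|u\|_{L^2(\bbR^2)}<\infty$, which you are given since $u\in H^1(\bbR^2)$, so the manipulation is legitimate.
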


\begin{proof}

The estimate is well-known if in the right hand side we replace $ \| u \|_{L^{2}(\calF_{\eps}(t))} $ by $ \| u \|_{L^{2}(\bbR^2)} $. Let us show that 
\begin{equation}
\label{unicorno}
\| u \|_{L^{2}(\bbR^2)} \leq C(\| \nabla u \|_{L^{2}(\bbR^2)} + \| u \|_{L^{2}(\calF_{\eps}(t))}).
\end{equation}

To see this let recall that $ \calS_{\eps}(t) \subset B_{\eps}(h_{\eps}(t)) $. By translation invariant of the norms we can assume $ h_{\eps}(t) = 0 $. Introduce the space 
\begin{equation*} 
X =  \left\{ v \in H^{1}(B_{2}(0)) \text{ such that } \bar{v} = \frac{1}{|B_{2}(0)\setminus B_1(0)|} \int_{|B_{2}(0)\setminus B_1(0)|}  v  = 0       \right\}.
\end{equation*}
The Poincar\'e inequality in $ X $ implies the existence of a constant $ C_X $ such that 

\begin{equation*}
\| v \|_{L^2(B_{2}(0))} \leq C_X \|\nabla v\|_{L^2(B_{2}(0))}
\end{equation*}
We deduce that 
\begin{align*}
\| u \|_{L^2(B_{2}(0))} \leq \left\| u - \bar{ u} \right\|_{L^2(B_{2}(0))} + \left\| \bar{ u} \right\|_{L^2(B_{2}(0))} \leq C_X  \|\nabla u\|_{L^2(B_{2}(0))} +  C \| u \|_{L^2(B_{2}(0))\setminus B_{1}(0)},
\end{align*}
which implies \eqref{unicorno}.

\end{proof}

\subsection*{Acknowledgment}
The research of M.B. is supported by the NWO grant OCENW.M20.194. The research of \v S.N. leading to these results has received funding from the Czech Sciences Foundation (GA\v CR), 22-01591S.  Moreover,  \v S. N. has been supported by  Praemium Academiae of \v S. Ne\v casov\' a. CAS is supported by RVO:67985840.










\end{document}